\documentclass[12pt]{article}

\usepackage[top=1in, bottom=1in, left=1in, right=1in]{geometry}
\usepackage{tikz}
\usetikzlibrary{calc,arrows.meta}

\usepackage{amsmath,amssymb,amsthm}
\usepackage{color}

\newcommand{\R}{\mathbb{R}}

\newcommand{\im}{\mathrm{i}}

\newcommand{\In}{\mathrm{in}}
\newcommand{\Sc}{\mathrm{sc}}
\newcommand{\loc}{\mathrm{loc}}

\newtheorem{lemma}{Lemma}[section]

\newtheorem{theorem}{Theorem}[section]
\newtheorem{definition}{Definition}
\newtheorem{proposition}{Proposition}[section]
\newtheorem{corollary}{Corollary}[section]
\newtheorem{remark}{Remark}[section]

\newcommand{\hh}[1]{{\color{blue} #1}}

\begin{document}

\title{Transmission Eigenvalues  and Non-scattering.}
\author{Fioralba Cakoni and Michael S. Vogelius \footnote{Department of Mathematics, Rutgers University, New Brunswick,  New Jersey,  USA 
 (fc292@math.rutgers.edu) and  (vogelius@math.rutgers.edu)}}
\date{\empty}
\maketitle

 \begin{abstract}
 \noindent
In this paper we survey some recent results concerning scattering and non-scattering in the context of the linear Helmholtz equation and inhomogeneities of nontrivial contrast. We examine isotropic as well as anisotropic media. Part of the survey deals with the so-called transmission spectrum, namely those wave numbers at which non-scattering potentially may occur. For wave numbers that are not transmission eigenvalues any incident wave leads to scattering, however, being at a transmission eigenvalue is far from sufficient to guarantee the occurence of non-scattering for even a single incident wave. For instance the inhomogeneity generically has to be smooth for non-scattering to occur. Similarly many smooth geometric shapes will be scattering for natural incident waves even at a transmission eigenvalue. Part of the survey discusses recent results of that nature. 
 \end{abstract}

\noindent{\bf Key words:}  inverse scattering, inhomogeneous media,  non-scattering,  transmission eigenvalues, free boundary.\\
\noindent{\bf AMS subject classifications:} 35R30, 35J25, 35P25, 35P05

\section{Introduction} Scattering theory studies the effect that an inhomogeneity, viewed as a perturbation of a known background, has on an incoming wave. This effect is referred to as the scattered field, and we say that the incoming wave is scattered by the inhomogeneity if it is nonzero. Inverse scattering relies on the fact that the scattered wave carries information about the inhomogeneity and can therefore be used to image it. A natural question in this context is whether there exist incoming time-harmonic waves, at certain frequencies, that are not scattered by a given inhomogeneity, in other words, the inhomogeneity is invisible to probing by such waves. We refer to wave numbers corresponding to these frequencies, for which there exists a non-scattering incoming wave, as {\it{non-scattering wave numbers}}. We emphasize that, in this definition, a non-scattering wave number is associated with particular incoming waves. An inhomogeneous medium  that admits non-scattering wave numbers is called {\it a non-scattering inhomogeneity}. The existence of non-scattering wave numbers, or the lack thereof, is a fundamental question in inverse scattering and wave-based imaging. To formulate this question precisely, we next introduce our model scattering problem.

\smallskip
\noindent
Let  $n$ be a strictly positive scalar function in $L^{\infty}(\R^d)$, and $A=(a_{ij})$ be an $d\times d$ symmetric matrix-valued function  with $L^{\infty}(\R^d)$ entries satisfying 
\begin{equation}\label{eq:Aellip}
c_0^{-1}|\xi|^2 \le\xi^{\top} A(x) \xi\le c_0\,|\xi|^2\qquad\mbox{for almost  all $x\in\mathbb{R}^d$ and all $\xi\in\mathbb{R}^d$}
\end{equation}
for some positive constant $c_0$. We further assume that $A-I$ and $n-1$  are supported in $\overline{D} \subset {\mathbb R}^d$, $d=2,3$ where $D$  is a bounded region with Lipschitz boundary $\partial D$. In other words
$$
A= \begin{cases} A_D \hbox{ in } D \\
I  \hbox{ in } \Omega \setminus D \end{cases} \hbox{ and } n= \begin{cases} n_D \hbox{ in } D \\
1  \hbox{ in } \Omega \setminus D \end{cases}~.
$$
We denote by $\nu$ the outward unit normal vector defined almost everywhere on $\partial D$. In our model, $A_D(x),n_D(x)$ for $x\in D$ characterize the constitutive material properties of a (potentially)  anisotropic dielectric medium occupying the region $D$ situated in an isotropic homogenous background with constitutive material properties scaled to one.  In what follows we refer to the inhomogeneous medium defined above as $(A_D,n_D,D)$. 

\noindent
Consider now a time harmonic  interrogating wave $e^{-i\omega t} u^{\In}(x)$,  where the space-dependent part  $u^{\In}$ satisfies the Helmholtz equation 
\begin{equation}\label{helm}
\Delta u^{\In}+k^2u^{\In}=0,
\end{equation}
and where $k>0$ is the wave number proportional to the interrogating frequency $\omega$. This models wave propagation  in the background. Frequently, $u^{\In}$ may be taken as an entire solution of the Helmholtz equation in  $\R^d$,  such as an incident plane wave $e^{ikx\cdot \eta}$ propagating in the direction $\eta\in \mathcal{S}^{d-1}$, where $\mathcal{S}^{d-1}$ is the unit sphere in $\R^d$, or a superposition of plane waves, referred to as a Herglotz wave functions, given by
\begin{equation}\label{herg}
v_{\varphi}(x)=\int_{\mathcal{S}^{d-1}} \varphi(\eta)e^{ikx\cdot \eta}\,ds_{\eta}, \qquad \mbox{with density  $\varphi\in L^2(\mathcal{S}^{\eta-1})$.}
\end{equation}
However, the incident wave may also have singularities located in the exterior of the inhomogeneity $D$, such as point sources, single- or double-layer dipoles.  In general,  $u^{\In}$ satisfies the Helmholtz equation  (\ref{helm}) in a region  ${\mathcal O}$ which compactly contains the support of the inhomogeneity $D$. (see e.g. \cite{coltonkress}). The scattered field $u^{\Sc}$ in $\R^d\setminus D$ and the total field $u$ in $D$,  due to the  incident field $u^{\In}$,  satisfy
\begin{eqnarray}
&\Delta u^{\Sc} + k^2  u^{\Sc}=0& \quad\mbox{in $~\R^d \setminus \overline{D}$} \label{maineq1}\\
&\nabla\cdot A_D \nabla u+ k^2 n_D  u= 0& \quad\mbox{in $D$}\label{maineq2}\\
&u=u^{\Sc}+u^{\In}\quad \mbox{and}  \quad \nu^{\top} \cdot A_D \nabla u=\partial_\nu(u^{\Sc}+u^{\In})&\quad \mbox{on $\partial D$}\label{maineq3}\\
&\lim\limits_{|x|\to\infty} |x|^{\frac{d-1}{2}}\left(\frac{\partial }{\partial |x|}u^{\Sc}-\im ku^{\Sc}\right)=0& ~\mbox{ uniformly for $\hat{x}:=\frac{x}{|x|}\in \mathcal{S}^{d-1}$}. \label{maineq4}
\end{eqnarray}
Obviously, the equations (\ref{maineq1})-(\ref{maineq2}) together with the transmission conditions  (\ref{maineq3}) can be equivalently written as an equation in the whole space  for the scattered field $u^{\Sc}$ (where in $D$ we set $u^{\Sc}:=u-u^{\In}$) 
\begin{equation}\label{MainGov1}
\nabla\cdot A \nabla u^{\Sc} + k^2 n  u^{\Sc}= \nabla\cdot (I-A)\nabla u^{\In}+k^2(1-n)u^{\In} \qquad\mbox{in $~\R^d$}\\
\end{equation}
along with the Sommerfeld radiation condition (\ref{maineq4}). We  recall that $(I-A)$ and $1-n$ are supported in $D$. It is known  that,  if in addition the entries of $A_D$ are in  $W^{1,\infty}(D)$ (this assumption is needed for the unique continuation) then (\ref{MainGov1})  admits a \emph{unique  solution} $u^{\Sc}\in H^1_{\loc}(\R^d)$ (see e.g. \cite{CakoniColtonHaddar2016}).  In the case of $A \equiv I$ included here, this solution  is $H^2_{\loc}(\R^d)$.  Additional regularity conditions on the coefficients $A_D,n_D$ and  the inhomogeneity $D$ will be imposed later as required. Of main interest to us is the case when $A$ or $n$ has a jump across $\partial D$.  

\noindent
In the framework of this model the  incident wave  $u^{\In}$, at wave number $k$, is not scattered by inhomogeneity  $(A_D,n_D,D)$ exactly when
\begin{eqnarray}\label{nons1}
&&\nabla\cdot A \nabla u^{\Sc} + k^2 n  u^{\Sc}= \nabla\cdot (I-A)\nabla u^{\In}+k^2(1-n)u^{\In}  \quad \mbox{in $\R^d$} \nonumber \\
&&\hbox{ with }\qquad u^{\Sc}\equiv 0 \quad \mbox{in $\R^d\setminus \overline{D}$},
\end{eqnarray}
in other words, $u^{\Sc}$ is  compactly supported with support inside $\overline{D}$. Equivalently, with the assumption of a Lipschitz smooth $\partial D$, this means $u^{\Sc}$ solves the overdetermined problem
\begin{eqnarray}
&\nabla\cdot A_D \nabla u^{\Sc} + k^2 n_D  u^{\Sc}= \nabla\cdot (I-A_D)\nabla u^{\In}+k^2(1-n_D)u^{\In}&  \quad \mbox{in $D$}~, \label{nons21}\\
&u^{\Sc}=0\quad \mbox{and}  \quad \nu^{\top} \cdot A_D \nabla u^{\Sc}=\nu^{\top} \cdot (I-A_D) \nabla u^{\In}&\quad \mbox{on $\partial D$}~.\label{nons22}
\end{eqnarray}

\begin{definition}[Non-scattering Inhomogeneity] {\em We say that the inhomogeneity $(A_D,n_D,D)$ is non-scattering if there exists a wave number $k>0$ and a nontrivial incident wave $u^{\In}$ such that (\ref{nons1}) or equivalently (\ref{nons21})-(\ref{nons22}) has a solution.
This $k>0$ is called a non-scattering wave number with the corresponding non-scattering incident wave $u^{\In}$.}
\end{definition}
\noindent
As already mentioned, for a given inhomogeneity and a given type of incident wave, to find non-scattering wave numbers amounts  to solving an overdetermined problem. To determine  if non-scattering occurs, we relax the requirement by including the incident field as part of the unknowns. More precisely, we consider $v:=u^{\In}|_D$ as an unknown that satisfies  the  Helmholtz equation  in $D$, and obtain the following homogeneous boundary value problem for two elliptic equations defined only in $D$
\begin{equation}\label{te}\left\{
\begin{array}{rrrrclll}
&\nabla\cdot A_D \nabla u^{\Sc} + k^2 n_D  u^{\Sc}= \nabla\cdot (I-A_D)\nabla v+k^2(1-n_D)v&  &\mbox{in \; \;\quad $D$}\\
&\Delta v+k^2v=0\qquad \qquad \qquad \qquad &   &\mbox{in \; \;\quad $D$} \\
&u^{\Sc}=0\quad \mbox{and}  \quad \nu^{\top} \cdot A_D \nabla u^{\Sc}=\nu^{\top} \cdot (I-A_D) \nabla v\;\; \;\;\; \;\; &  & \mbox{on \; $\partial D$}~.
\end{array}
\right.
\end{equation}
This is  known as the transmission eigenvalue problem. It is a non-selfadjoint eigenvalue problem with  challenging mathematical structure.  Rewritten for $v$ and $u=v+u^{\Sc}$ the {\it{transmission eigenvalue problem}} reads: find $k$, $v\neq 0$ and $u\neq 0$ such that
\begin{equation}\label{te2}\left\{
\begin{array}{rrrrclll}
&\nabla\cdot A_D \nabla u + k^2 n_D  u= 0&  \; \mbox{in \; \quad $D$} \\
&\Delta v+k^2v=0 \qquad &  \; \mbox{in \, \quad $D$} \\
&u=v\qquad \quad &  \;\mbox{on \quad $\partial D$} \\
&\nu^{\top} \cdot A_D \nabla u=\nu^{\top} \nabla v \quad &  \; \mbox{on  \quad $\partial D$}~.
\end{array}
\right.
\end{equation}
\begin{definition}[Transmission Eigenvalues] \em{Values of $k\in {\mathbb C}$ for which (\ref{te2}) admits a nontrivial solution $(u,v)$ are called transmission eigenvalues. The non-trivial solution $(u,v)$ is referred to as the  corresponding eigenfunction.}
\end{definition}
\noindent
The transmission eigenvalue problem having a solution is a necessary condition for an inhomogeneity to be non-scattering. If $k>0$ is a non-scattering wave number with incident field $u^{\In}$, then $k$ is a transmission eigenvalue with corresponding  eigenvector $v:=u^{\In}|_D$ and $u:=v+u^{\Sc}$. The converse is not generally true.  If $k>0$ is a real transmission eigenvalue, the $v$ part of the eigenvector  is defined  only in $D$ and is not necessary the restriction of an incident field which solves the Helmholtz equation in a larger region ${\mathcal O}\supseteq \overline{D}$.  As we will describe latter, the existence of  infinitely many real transmission eigenvalues is proven for a large class of (not necessarily regular) inhomogeneities. A central question therefore becomes under what circumstances a transmission eigenvalue yields a non-scattering inhomogeneity. It turns out that the existence of a non-scattering wave number (unlike the existence of real transmission eigenvalues) generically implies a certain regularity of the inhomogeneity; at the core it becomes a question of free boundary regularity. For very regular (real analytic) inhomogeneities the natural question now arises whether (or when) the incident part, $v$, of the transmission eigenfunction can be extended to all of $\R^d$, or whether it can take the form of one of the special incident waves mentioned earlier.

\noindent 
The analysis for $A\equiv I$ and $A\not\equiv I$ differ fundamentally. Our discussion considers the case $A\equiv I$ in the next section and the case $A\not\equiv I$ in Section~\ref{AneqI}. The latter case exhibits a richer structure.  We conclude this introduction by noting that the transmission eigenvalue problem, introduced independently in 1986 in \cite{CM} and \cite{kirsch}, has since attracted considerable interest from both the inverse scattering and spectral theory communities, as evidenced by a vast and growing literature. The proper understanding of the transmission eigenvalue problem is a fundamental component to establish uniqueness results for various inverse problems and serves as a key building block in many reconstruction methods for inverse scattering problems. Furthermore, transmission eigenvalues can in principle be determined from scattering data, and thus allow for the extraction of information about the constitutive material properties of inhomogeneities. We refer the reader to the monograph \cite{CakoniColtonHaddar2016} for an up-to-date discussion and a comprehensive list of references on the subject. The existence of non-scattering inhomogeneities was first addressed in \cite{nonscat4} in 2014 and in subsequent works \cite{ nn5, ElH18, HSV16, PSV17}, where it was shown that inhomogeneities with corners, edges, or conical singularities will be scattering. The connection between non-scattering and free boundary problems was first explored in \cite{CV} and \cite{fb7}, offering a new perspective that led to deeper results in \cite{CVX, fb2, fb3, fb4}, which broadly speaking demonstrate that almost all singularities scatter. The case of analytic inhomogeneities has only recently been studied in \cite{HV} and \cite{HV2}, using generalizations of techniques that were previously used for the analysis of the so-called Pompeiu/Schiffer problem (in \cite{GaSe} and \cite{BrKa}). In addition to its mathematical significance, the question of whether non-scattering inhomogeneities exist is also important for applications. In particular, at a non-scattering wave number, the relative scattering operator, also known as the far-field operator, is not injective (see \cite{res1} and \cite{coltonkress}). This non-injectivity may lead to the failure of certain reconstruction methods in inverse scattering.
 
\section{The Case of $A\equiv I$.}
In this case the inhomogeneity, now denoted by $(n_D,D)$, has contrast only in the lower order terms. We always assume that $\partial D$ is Lipschitz and  $n_D\in L^\infty(D)$, $n_D>0$.  Additional assumptions are added as needed.  In this case, the inhomogeneity $(n_D,D)$ is non-scattering if there exists a $k>0$ and $v\neq 0$  satisfying
\begin{equation}\label{nonsn1}
\Delta v+k^2v=0  \quad \mbox{in ${\mathcal O}\supset \overline{D}$}~, 
\end{equation}
such that the following problem has solution 
\begin{eqnarray}
&\Delta w + k^2 n_D  w= k^2(1-n_D)v&  \quad \mbox{in $D$}~, \label{nonsn2}\\
&w=0\quad \mbox{and}  \quad \displaystyle{\frac{\partial w}{\partial \nu}}=0 &\quad \mbox{on $\partial D$}~,\label{nonsn3}
\end{eqnarray}
where for simplicity we denote $w:=u^{\Sc}$ and $v:=u^{\In}$, the corresponding non-scattering incident wave. The transmission eigenvalue problem reads: Find  nonzero $w\in H^2_0(D)$ and $v\in L^2(D)$ such that
\begin{equation}\label{ten}
\left\{
\begin{array}{rrrccllll}
&\Delta w + k^2 n_D  w= k^2(1-n_D)v& \qquad  \mbox{in} & D&~,\\
&\Delta v+k^2v=0 \qquad \; & \qquad  \mbox{in} & D &~,\\
&w=0\quad \mbox{and}  \quad \displaystyle{\frac{\partial w}{\partial \nu}}=0 &\mbox{on} &  \partial D~.&
\end{array}
\right.
\end{equation}
If we set  $u:=w+v$, then (\ref{ten}) can be written as: Find nonzero $u\in L^2(D)$ and $v\in L^2(D)$, with $u-v\in H^2_0(D)$, such that 
\begin{equation}\label{ten2}
\left\{
\begin{array}{rrrccllll}
&\Delta u + k^2 n_D  u= 0& \qquad  \mbox{in} & D&~,\\
&\Delta v+k^2v=0  & \qquad  \mbox{in} & D&~, \\
&u-v=0 & \qquad  \mbox{on} &  \partial D&~,\\
&\displaystyle{\frac{\partial u}{\partial \nu}}-\displaystyle{\frac{\partial v}{\partial \nu}}=0 & \qquad  \mbox{on} &  \partial D~,&
\end{array}
\right.
\end{equation}
where the equations for $u$ and $v$ hold in the distributional sense and  $H^2_0(D)$ is the space of functions in $H^2(D)$ with zero Cauchy data $w=\frac{\partial w}{\partial \nu}=0$ on $\partial D$. 
\subsection{Spherically symmetric inhomogeneities} \label{sphere}
To shed light onthe structure of non-scattering inhomogeneities consider the spherically symmetric case where $D=B_1(0):=\left\{x\in {\mathbb R}^3:\, |x|<1\right\}$,  and $n_{B_1(0)}(x):=n(r)$ is a function of the radial variable only and furthermore $n\in C^2[0,1]$. Introducing spherical coordinates $(r,\hat x)$, we look for solutions of the equations in  (\ref{ten2})  in the form
$$
v(r,\hat x)=j_{\ell}(kr)Y_\ell(\hat x)  \qquad \mbox{and}\qquad u(r,\hat x)=b_{\ell,k}y_{\ell,k}(r)Y_\ell(\hat x), \qquad \ell=0,1,\ldots
$$
where $Y_\ell$  denotes one of the $2\ell+1$ linearly independent spherical harmonics of order $\ell$ (these form a complete orthogonal system in $L^2({\mathcal S}^2)$, see e.g. \cite[Theorem 2.8]{coltonkress}), $j_\ell$ is a spherical Bessel function, $b_{\ell,k}$ is a constant, and $y_{\ell,k}$ is a solution to
$$y''_{\ell,k}+\frac{2}{r}y'_{\ell,k}+\left(k^2n(r)-\frac{\ell(\ell+1)}{r^2}\right)y_{\ell,k}=0 \hbox{ for } r>0~,$$ 
which behaves like $j_\ell(kr)$ as $r\to 0$, {\it i.e.}, 
$$\lim_{r\to 0}r^{-\ell}y_{\ell,k}(r)=\frac{\sqrt{\pi}k^\ell}{2^{\ell+1}\Gamma(\ell +3/2)}~.$$
Applying the boundary conditions in (\ref{ten2}) for $r=1$, we can deduce that $k\in {\mathbb C}$  is a  transmission eigenvalue if and only if
\begin{equation}\label{e2.2}
d_\ell(k)=\mbox{det}\left(\begin{array}{rrcll}
y_{\ell,k}(1) & -j_\ell(k)\\
 &\\
y'_{\ell,k}(1) & -kj'_\ell(k)
\end{array}\right)=0.
\end{equation}
The first observation is that $d_\ell(k)$, $\ell=0,1\cdots $ are entire functions of $k$ and that $d_\ell(k)\equiv 0$  if and only if $n(r)\equiv 1$ \cite[Theorem 6.1]{CakoniColtonHaddar2016}. Thus for nonzero contrast,  the zeros  of each $d_\ell(k)$,  in the complex plane are discrete without interior accumulation points.  In the special case $n(1)=1$, it is known that  $d_\ell(k)$ has the following asymptotic behavior as  $k>0$ goes to $+\infty$
\begin{equation}\label{e2.3}
d_\ell(k)=\frac{1}{k\left[n(0)\right]^{\ell/2+1/4}}\,\sin k\left(1-\int_0^1[{n(r)}]^{1/2}dr\right)+O\left(\frac{\ln k}{k^2}\right),
\end{equation}
which immediately shows that $d_\ell(k)$ has infinitely many real zeros (real transmission eigenvalues) provided 
\begin{equation}\label{cond}
M_n:=\int_0^1[{n(r)}]^{1/2}dr\neq 1.
\end{equation}
A simimilar formula holds for $n(1)\neq 1$, which may be used to show that in this case there are also infinitely many zeros, even when $M_n=1$. 
Each transmission eigenvalue corresponding to a zero of $d_\ell(k)$ has multiplicity at least $2\ell + 1$, and  this multiplicity is finite, as the transmission eigenvalues are  eigenvalues of a compact operator and thus have finite-dimensional eigenspaces.  For each real transmission eigenvalue, the part  $v$ of the eigenfunction, $v_\ell(r,\hat x)=j_{\ell}(kr)Y_\ell(\hat x)$, is a solution of the Helmholtz equation in ${\mathbb R}^3$. In fact  by the Funk - Hecke formula (see \cite[Chapter 2]{coltonkress}), $v_\ell$ is a Herglotz wave function (see (\ref{herg})). Hence every real transmission eigenvalue is a non-scattering wave number. The following proposition summarizes the above discussion.
\begin{proposition}\label{snon}
Spherically symmetric inhomogeneities $(B_1(0), n(r))$, with $n(1)\neq 1$ or $M_n\neq 1$,  are non-scattering. For such inhomogeneities,  the set of non-scattering wave numbers coincides with the set of real transmission eigenvalues. More specifically  $(B_1(0), n(r))$ admits an infinite family of non-scattering incident  Herglotz wave  functions (\ref{herg}) of the form $v_\ell(r,\hat x)=j_{\ell}(kr)Y_\ell(\hat x)$, $\ell=0,1, \cdots$, where each incident wave is not scattered at infinitely many  non-scattering wave numbers $k>0$, which are zeros of $d_{\ell}(k)$. 
\end{proposition}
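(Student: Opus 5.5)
The plan is to turn the separation-of-variables discussion above into a two-way correspondence between real zeros of $d_\ell$ and non-scattering wave numbers. I will prove two inclusions between the set of real transmission eigenvalues and the set of non-scattering wave numbers. Then I will invoke the asymptotics to get infinitely many zeros for each $\ell$.

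\emph{Non-scattering wave numbers are real transmission eigenvalues.} This is immediate from the general remark after (\ref{te2}). If $k>0$ is non-scattering with incident wave $u^{\In}$, then $v=u^{\In}|_{B_1(0)}$ and $u=v+u^{\Sc}$ solve (\ref{ten2}). The pair is nontrivial because $u^{\In}\not\equiv0$ and $u^{\In}$ is real analytic in $\mathcal O\supset\overline{B_1(0)}$, so it cannot vanish on $B_1(0)$.

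\emph{Real transmission eigenvalues are non-scattering wave numbers.} Fix $\ell$ and a real $k>0$ with $d_\ell(k)=0$. The columns $(y_{\ell,k}(1),y'_{\ell,k}(1))$ and $(j_\ell(k),kj'_\ell(k))$ are then linearly dependent.
\begin{itemize}
\item First I check that $y_{\ell,k}$ has nonzero Cauchy data at $r=1$. This holds because $y_{\ell,k}\not\equiv0$ and the ODE is regular on $(0,1]$. Consequently there is $b_{\ell,k}$ with $b_{\ell,k}y_{\ell,k}(1)=j_\ell(k)$ and $b_{\ell,k}y'_{\ell,k}(1)=kj'_\ell(k)$.
\item If $j_\ell(k)=j'_\ell(k)=0$, then $j_\ell\equiv0$, which is impossible. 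So $b_{\ell,k}\neq0$.
\item Set $v=j_\ell(kr)Y_\ell$ and $u=b_{\ell,k}y_{\ell,k}(r)Y_\ell$. Then $w=u-v$ lies in $H^2_0(B_1(0))$. Regularity at the origin follows from the normalization $r^{-\ell}y_{\ell,k}\to$ const together with $n\in C^2$, which makes $y_{\ell,k}(r)Y_\ell(\hat x)$ a genuine $H^2$ (indeed $C^2$) solution near $0$.
\item The function $v$ extends as an entire solution of Helmholtz in $\R^3$. By Funk–Hecke, $j_\ell(k|x|)Y_\ell(\hat x)=\frac{1}{4\pi \im^\ell}\int_{\mathcal S^2}Y_\ell(\eta)e^{ikx\cdot\eta}ds_\eta$, so $v$ is a Herglotz wave function $v_\varphi$ with $\varphi=Y_\ell/(4\pi\im^\ell)$.
\item Define $u^{\Sc}=w$ in $B_1(0)$ and $u^{\Sc}=0$ outside. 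Since $w$ has zero Cauchy data, $u^{\Sc}\in H^2_{\loc}(\R^3)$ and solves (\ref{MainGov1}) with $A=I$. It trivially satisfies (\ref{maineq4}).
\end{itemize}
By uniqueness of the scattering solution this is \emph{the} scattered field, and it vanishes outside $\overline{B_1(0)}$. Hence $k$ is a non-scattering wave number.

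To get every real transmission eigenvalue, I also need that every eigenfunction decomposes into spherical harmonics. I will expand $u,v$ in $Y_\ell^m$; each nonzero mode gives a solution of the radial problem, so $d_\ell(k)=0$ for some $\ell$. Non-scattering then follows from the argument above.

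\emph{Infinitely many wave numbers for each $\ell$.} If $M_n\neq1$ and $n(1)=1$, (\ref{e2.3}) shows that $k\,n(0)^{\ell/2+1/4}d_\ell(k)=\sin(k(1-M_n))+O(\ln k/k)$. This has a sign change near each $k=(m+\tfrac12)\pi/|1-M_n|$ for large $m$, so the intermediate value theorem gives infinitely many real zeros; here $d_\ell$ is real for real $k$ since $n$ is real. For $n(1)\neq1$ I rely on the analogous asymptotic formula cited in the text. \textbf{This step is the main obstacle if one insists on a self-contained proof.} Deriving that formula requires a Liouville transformation $\xi=\int_0^r\sqrt{n}$ and WKB estimates for $y_{\ell,k}$, uniform in large real $k$. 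The leading term becomes a combination of $\sin k(1-M_n)$ and $\cos k(1-M_n)$ with coefficients involving $n(1)^{1/4}$ and $n(1)^{-1/4}$, which does not vanish identically and oscillates when $n(1)\neq1$. I would take these asymptotics from \cite{CakoniColtonHaddar2016} as the paper does.
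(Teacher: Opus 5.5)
Your proposal follows the paper's own argument: separation of variables, the determinant $d_\ell$, Funk--Hecke to identify $j_\ell(kr)Y_\ell(\hat x)$ as a Herglotz wave with the $k$-independent density $Y_\ell/(4\pi\im^\ell)$, and the large-$k$ asymptotics of $d_\ell$. It is correct, and your spherical-harmonic decomposition and explicit construction of $u^{\Sc}$ supply details the paper leaves implicit.

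One correction to your heuristic for $n(1)\neq 1$: the leading term is not a combination of $\sin k(1-M_n)$ and $\cos k(1-M_n)$, since such a combination would be constant when $M_n=1$, exactly the case this branch must cover. For $\ell=0$ the Liouville computation gives $d_0(k)\approx \frac{1}{2k\,n(0)^{1/4}}\bigl[(n(1)^{1/4}+n(1)^{-1/4})\sin k(1-M_n)+(n(1)^{1/4}-n(1)^{-1/4})\sin k(1+M_n)\bigr]$. It is the $\sin k(1+M_n)$ term, present precisely when $n(1)\neq 1$, that yields infinitely many sign changes even when $M_n=1$.
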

\noindent
We remark that the spherically symmetric inhomogeneities are the only ones known to be non-scattering in the case $A=I$. We return to this issue in Section \ref{anal}.

\noindent
Although in the context of non-scattering only real transmission eigenvalues are relevant, it is nevertheless desirable  to investigate whether  truly complex eigenvalues exist. In the spherically symmetric case, this amounts to determining whether  the determinants $d_\ell(k)$, which are entire functions of $k$ of order one,  have truly complex zeros. In \cite{CY2} it is proven that, under additional assumptions $n\in C^3[0,\,1]$ such that $n'''$ is absolutely continuous, $n(1) = 1$,  $n'(1) = 0$, and $n''(1)\neq 0$, the function  $d_0(k)$ has infinitely many truly complex zeros as well. This particular case demonstrates that the {\it{transmission eigenvalue problem is non-selfadjoint}}. The proof is based on comparing  the density of the positive zeros of $d_0(k)$, obtained via the asymptotic expansion, to the density of its zeros in right half-plane using the Cartwright - Levinson Theorem.  

\noindent
Finally, for the reader interested in inverse spectral problems, we mention that the refractive index $n(r)$ is uniquely determined from the knowledge of all real and complex transmission eigenvalues, provided that  either $n(r)\geq 1$ or $0<n(r)\leq 1$ and the value $n(0)$ is known. If one knows a priori that $0<n(r)<1$, then it can be shown that $n(r)$ is uniquely determined from the knowledge of all the zeros of $d_\ell(k)$ for a fixed $\ell$, without requiring knowledge of $n(0)$. For more details, we refer the reader to \cite{CakoniColtonHaddar2016}.
\subsection{The transmission eigenvalue problem}\label{TEn}
We return our attention to the transmission eigenvalue problem (\ref{ten}) or (\ref{ten2}).  This eigenvalue problem has a deceptively simple formulation, namely two elliptic PDEs in the  bounded domain $D$ with a single set of Cauchy data on the boundary, but as we have already seen in the spherically symmetric case, it is a non-selfadjoint eigenvalue problem. Since the problem is formulated entirely on $D$, we drop, for simplicity of notation, the subscript on the index of refraction and refer to it as $n$. We assume that $n-1$ is uniformly of one sign: to fix our ideas  we assume that  $\inf_D  n=n_{*}>1$. Dividing by $n-1$ in (\ref{ten}) the problem becomes to find  nonzero $u\in H^2_0(D)$  solving
\begin{equation}\label{forthorder}
\left(\Delta+k^2\right)\frac{1}{n-1}\left(\Delta +k^2n\right)u=0  \quad \mbox{in } D~,
\end{equation}
and this may be written as
\begin{equation}\label{peno}
({\mathbb T}+\, k^2 {\mathbb T}_1+ k^4 {\mathbb T}_2)u=0~,
\end{equation}
where the bounded linear operators ${\mathbb T}, {\mathbb T}_1, {\mathbb T}_2 \!:\!H^2_0(D)\to H^2_0(D)$ are defined by means of the Riesz representation  theorem as follows
$$\left({\mathbb T}u, \psi\right)_{H^2(D)}=\int_D\frac{1}{n-1}\Delta u \, \Delta \overline{\psi} \, \textrm{d}x~,  \qquad \qquad  \left({\mathbb T}_2 u, \psi\right)_{H^2(D)} = \int_D\frac{n}{n-1} u \,  \overline{\psi} \, dx~,$$
$$\begin{array}{rrclll}
&\left({\mathbb T}_1u, {\psi}\right)_{H^2(D)}=\displaystyle{\int_D\frac{n}{n-1}u\ \Delta  \overline{\psi} \,dx+\int_D \frac{1}{n-1}\Delta u \,   \overline{\psi}\, dx}&\\
 &\quad =\displaystyle{\int_D\frac{1}{n-1}\left(\Delta u\,  \overline{\psi} + u\ \Delta  \overline{\psi} \right)\,dx-\int_D \nabla u \cdot \nabla  \overline{\psi}\, dx}& \end{array}
 $$
for all $u,\psi \in H^2_0(D)$. Obviously all these operators are selfadjoint; in addition ${\mathbb T}$ is positive invertible, ${\mathbb T}_1$ is compact  and ${\mathbb T}_2$ is compact and non-negative (compactness holds thanks to the compact embedding of $H^2(D)$ into $L^2(D)$ and $H^1(D)$). We introduce the operators ${\mathbb K}_1=-{\mathbb T}^{-1/2}{\mathbb T}_1 {\mathbb T}^{-1/2}$ and ${\mathbb K}_2={\mathbb T}^{-1/2}{\mathbb T}_2 {\mathbb T}^{-1/2}$. It is now convenient to rewrite  (\ref{peno}) as a more classical eigenvalue problem 
$$\left({\mathbf K}- \frac{1}{k^2} {\mathbf I}\right)U=0, \qquad  U\in H^2_0(D)\times H^2_0(D)$$
for the compact matrix operator ${\mathbf K}: H^2_0(D)\times H^2_0(D) \to H^2_0(D)\times H^2_0(D)$ given by
$${\mathbf K}:=\left(\begin{array}{rrcll}
 {\mathbb K}_1\;\;\;&-{\mathbb K}_2^{1/2}\\
{\mathbb K}_2^{1/2}& 0\;\;\;\;
\end{array}
\right)~,
$$
with   $U:=\left({\mathbb T}^{1/2}u, k^2 {\mathbb K}_2^{1/2} {\mathbb T}^{1/2}u\right)$ (here we also use that ${\mathbb K}_2$ is non-negative).  Although ${\mathbb K}_1$ and ${\mathbb K}_2$ are selfadjoint, ${\mathbf K}$ is not, which again reveals the non-selfadjoint nature of the transmission eigenvalue problem. This analysis leads to
\begin{theorem}
Assume $\partial D$ is Lipschitz,  $n\in L^\infty(D)$ and $n_{*} = \inf_D  n>1$. Then the set of transmission eigenvalues $k\in {\mathbb C}$ is at most discrete with $\infty$ as the only possible accumulation point. The corresponding eigenspaces are of finite dimension.
\end{theorem}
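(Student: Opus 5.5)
The plan is to make the reduction sketched just before the statement rigorous, and then apply spectral theory for compact operators. The first step is the equivalence between transmission eigenvalues and nontrivial solutions of (\ref{peno}). If $(w,v)$ solves (\ref{ten}) with $k\neq 0$, then $w\in H^2_0(D)$, and since $n-1\ge n_*-1>0$ we may write $v=\frac{1}{k^2(n-1)}(\Delta+k^2n)w$. The equation $\Delta v+k^2v=0$ in the distributional sense, tested against $\overline\psi\in H^2_0(D)$, is exactly the variational form of (\ref{forthorder}). This is where the density of $C_0^\infty(D)$ in $H^2_0(D)$ and the fact that $v\in L^2(D)$ are used. Conversely, a nonzero $u\in H^2_0(D)$ solving (\ref{peno}) produces $v\in L^2(D)$ satisfying the Helmholtz equation. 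Moreover $v\neq 0$: otherwise $(\Delta+k^2 n)u=0$ with zero Cauchy data, and integrating by parts against $\overline u$ gives $\|\nabla u\|^2=k^2\int n|u|^2$. That identity alone does not force $u=0$, so instead I would invoke unique continuation: extend $u$ by zero outside $D$, where it solves a Schrödinger equation with $L^\infty$ potential, so $u\equiv 0$. The value $k=0$ I treat separately and simply exclude, since $\mathbb{T}$ is invertible and hence $k=0$ gives only the trivial solution. The map $u\mapsto(w,v)$ is linear and injective, so the dimensions of the eigenspaces agree.

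Next I verify the properties of the operators. $\mathbb T$ is positive and invertible because $\|\Delta u\|_{L^2}$ defines a norm on $H^2_0(D)$ equivalent to the $H^2$ norm: integrating by parts twice on the Lipschitz domain via $C_0^\infty$ density gives $\|D^2u\|=\|\Delta u\|$, and Poincaré handles the lower-order terms. Together with $\frac{1}{n-1}\ge \frac{1}{\|n\|_\infty-1}$ this yields coercivity, so $\mathbb T^{\pm1/2}$ exist. Compactness of $\mathbb T_1$ and $\mathbb T_2$ follows from Rellich, and $\mathbb T_2\ge0$ because $n/(n-1)>0$. Setting $\lambda=k^2$ and $\tilde u=\mathbb T^{1/2}u$, (\ref{peno}) becomes $(I-\lambda\mathbb K_1+\lambda^2\mathbb K_2)\tilde u=0$.

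I would then linearize. With $U=(\tilde u,\lambda\mathbb K_2^{1/2}\tilde u)$, a direct computation shows $\mathbf K U=\frac1\lambda U$. Conversely, $\mathbf K U=\mu U$ with $\mu\neq0$ forces $U_2=\mu^{-1}\mathbb K_2^{1/2}U_1$, which returns the quadratic equation with $\lambda=1/\mu$. This gives a bijection between nontrivial solutions at $\lambda$ and the eigenspace $\ker(\mathbf K-\lambda^{-1}\mathbf I)$. Since $\mathbf K$ is compact, Riesz–Schauder theory says its nonzero spectrum is a discrete set accumulating only at $0$, with finite-dimensional eigenspaces. Hence the admissible $\lambda=k^2$ are discrete and can accumulate only at $\infty$. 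Taking square roots, the transmission eigenvalues $k=\pm\sqrt\lambda$ form a discrete set in $\mathbb C$ with $\infty$ as the only possible accumulation point, and their eigenspaces are finite dimensional.

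The main obstacle is the first step: showing that the fourth-order reformulation loses no eigenfunctions and creates no spurious ones in the low-regularity setting of $v\in L^2$ and a Lipschitz boundary. The point of care there is the nontriviality of $v$, which I plan to obtain from unique continuation as above.
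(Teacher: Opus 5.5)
Your proposal is correct and follows the paper's route: the fourth-order reformulation (\ref{peno}) with $\mathbb T,\mathbb T_1,\mathbb T_2$, then linearization via the compact block operator $\mathbf K$ with $U=(\mathbb T^{1/2}u,k^2\mathbb K_2^{1/2}\mathbb T^{1/2}u)$, then Riesz--Schauder theory, where you fill in details the paper leaves implicit (the weak equivalence with $v\in L^2(D)$, the exclusion of $k=0$, compactness via Rellich). There is only a harmless sign slip: (\ref{ten}) gives $v=-\frac{1}{k^2(n-1)}(\Delta+k^2n)w$, which changes nothing because all the equations are linear.
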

\noindent
Non-scattering wave numbers are a subset of the real transmission eigenvalues, so a natural question arises: ``do real transmission eigenvalues exist". Following \cite{1}, after multiplitation of (\ref{forthorder}) by $\psi$ and integration by parts, we  define  the bounded linear operators ${\mathbb A}_\tau: H^2_0(D)\to H^2_0(D)$ and  ${\mathbb B}:H^2_0(D)\to H^2_0(D)$ by means of the Riesz representation  theorem 
\begin{eqnarray*}
\left({\mathbb A}_\tau u,\psi\right)_{H^2(D)}&=&\int_D\frac{1}{n-1}(\Delta  u+\tau u) (\Delta   \psi+\tau \psi)\,dx +\tau^2 \int_D u\psi\,dx~,\\
 \left({\mathbb B} u,\psi\right)_{H^2(D)}&=&\int_D\nabla u \cdot \nabla \psi\, dx~,
\end{eqnarray*}
with $\tau:=k^2$.   Thus the transmission eigenvalue problem reads
\begin{equation}\label{tep}
({\mathbb A}_\tau-\tau {\mathbb B})u=0~.
\end{equation}
Obviously ${\mathbb B}$ is a compact and positive linear operator. Simple estimates give that 
$$
\left({\mathbb A}_\tau u,u\right)_{H^2(D)}\geq\gamma \|\Delta u+\tau u\|_{L^2}^2+\tau^2\|u\|^2_{L^2}\geq\left(\gamma-\frac{\gamma^2}{\sigma}\right)\|\Delta u\|_{L^2(D)}^2+ (1+\gamma-\sigma)\tau^2\|u\|_{L^2}^2~,\nonumber\\
$$
where $\gamma:=\frac{1}{n^{*}-1}$, $n^*= \sup_D n$ and $\sigma$ is any number with $\gamma<\sigma<\gamma+1$. This implies coercivity since the $L^2$-norm of $\Delta u$ is equivalent of $H^2$-norm of $u$ in $H^2_0(D)$. Therefore $\tau\in (0,\,+\infty)\mapsto {\mathbb A}_\tau \in {\mathcal L}(H^2_0(D))$ is a continuous mapping into the space of self-adjoint positive definite bounded linear operators.  
\begin{proposition}
 If $n_{*}>1$ then values of $k>0$ for which  $k^2<\frac{\lambda_1(D)}{n^{*}}$, where $\lambda_1(D)$ is the first Dirichlet eigenvalue of $-\Delta$ in $D$, cannot be transmission eigenvalues (or non-scattering wave numbers).
\end{proposition}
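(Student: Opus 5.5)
Suppose $k>0$ is a transmission eigenvalue with $\tau=k^2<\lambda_1(D)/n^*$. By (\ref{tep}) there is a nonzero $u\in H^2_0(D)$ with $({\mathbb A}_\tau-\tau{\mathbb B})u=0$, and hence
$$0=\int_D\frac{1}{n-1}|\Delta u+\tau u|^2\,dx+\tau^2\int_D|u|^2\,dx-\tau\int_D|\nabla u|^2\,dx .$$
(For complex $u$, use the sesquilinear versions of the forms; the computation is unchanged.) The plan is to show that the right-hand side is strictly positive when $u\neq0$ and $\tau<\lambda_1(D)/n^*$, which is a contradiction. Non-scattering wave numbers are real transmission eigenvalues, so they are excluded as well.

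The key steps, in order, are as follows.

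\emph{Step 1: a pointwise lower bound on the first term.} Since $n-1\le n^*-1$, we have $\frac1{n-1}\ge\gamma=\frac1{n^*-1}$, so the first term is at least $\gamma\|\Delta u+\tau u\|^2$.

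\emph{Step 2: expand the square.} Integrating by parts in $H^2_0(D)$ gives $(\Delta u,u)=-\|\nabla u\|^2$. Therefore
$$\gamma\|\Delta u+\tau u\|^2+\tau^2\|u\|^2-\tau\|\nabla u\|^2=\gamma\|\Delta u\|^2-(2\gamma+1)\tau\|\nabla u\|^2+(\gamma+1)\tau^2\|u\|^2 .$$

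\emph{Step 3: a Poincaré-type inequality.} Use $\|\Delta u\|\,\|u\|\ge\|\nabla u\|^2$ together with the Poincaré inequality $\|\nabla u\|^2\ge\lambda_1\|u\|^2$, valid since $u\in H^1_0(D)$. Together these give $\|\Delta u\|^2\ge\lambda_1\|\nabla u\|^2$. Setting $X=\|\nabla u\|^2$ and $Y=\|u\|^2$, we get the bound $\|\Delta u\|^2\ge X^2/Y$, so the expression is at least
$$\frac1Y\Bigl(\gamma X^2-(2\gamma+1)\tau XY+(\gamma+1)\tau^2Y^2\Bigr)=\frac1Y\,(\gamma X-(\gamma+1)\tau Y)(X-\tau Y).$$
By Poincaré, $X\ge\lambda_1 Y$. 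Both factors are positive provided $\lambda_1>\tau$ and $\gamma\lambda_1>(\gamma+1)\tau$. The second condition reads $\tau<\frac{\gamma}{\gamma+1}\lambda_1=\frac{\lambda_1}{n^*}$, and it implies the first. Hence, for $\tau<\lambda_1/n^*$ and $u\ne0$, the expression is strictly positive, contradicting the identity above.

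The main obstacle is Step 3: the naive coercivity estimate stated before the proposition only handles the $\|u\|^2$ and $\|\Delta u\|^2$ terms and does not directly control $-\tau\|\nabla u\|^2$ with the sharp constant. The factorization of the quadratic form in $(X,Y)$ is what yields exactly the threshold $\lambda_1/n^*$. If the factorization had failed, I would have fallen back on eliminating the middle term by weighted Cauchy--Schwarz, giving a cruder constant.
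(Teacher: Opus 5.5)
Your proof is correct, and it differs from the paper's in how the cross term and the term $-\tau\|\nabla u\|^2$ are handled.

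\emph{The paper's route.} The paper does not expand the square exactly. It bounds $\gamma\|\Delta u+\tau u\|^2+\tau^2\|u\|^2$ from below using Young's inequality with a free parameter $\sigma\in(\gamma,\gamma+1)$, obtaining $(\gamma-\gamma^2/\sigma)\|\Delta u\|^2+(1+\gamma-\sigma)\tau^2\|u\|^2$. It then absorbs $\tau\|\nabla u\|^2$ into the first term via the Poincar\'e inequality applied to $\nabla u\in H^1_0(D)$, namely $\|\nabla u\|^2\le\lambda_1^{-1}\|\Delta u\|^2$. Positivity follows for $\tau<(\gamma-\gamma^2/\sigma)\lambda_1$, and letting $\sigma\uparrow\gamma+1$ gives exactly $\tau<\tfrac{\gamma}{1+\gamma}\lambda_1=\lambda_1/n^*$.

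So your closing remark, that the coercivity estimate ``does not directly control $-\tau\|\nabla u\|^2$ with the sharp constant'', is not accurate. That route reaches the same threshold. In fact, you derive the needed inequality $\|\Delta u\|^2\ge\lambda_1\|\nabla u\|^2$ yourself in Step 3, and then don't need it. Your fallback worry was unfounded.

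\emph{What your route buys.} It needs no auxiliary parameter and no limiting argument, only the ordinary Poincar\'e inequality for $u$ plus Cauchy--Schwarz. It also explains the threshold transparently: the quadratic in $t=X/Y$ has roots $\tau$ and $n^*\tau$, so positivity on $t\ge\lambda_1$ is precisely the condition $n^*\tau<\lambda_1$.

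\emph{What the paper's route buys.} It gives a lower bound $c\|\Delta u\|^2$ with $c>0$, i.e.\ coercivity of $\mathbb{A}_\tau-\tau\mathbb{B}$ in the $H^2_0(D)$ norm. It gets this simply by reusing the estimate already established for $\mathbb{A}_\tau$.

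\emph{Sufficiency of your bound.} Your factorization, combined with $X\ge\lambda_1 Y$, gives a lower bound $c\|\nabla u\|^2=c(\mathbb{B}u,u)$. This is all the proposition requires. It is also enough for the subsequent min--max statement $\lambda_j(\tau_0)>\tau_0$.
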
\label{prop1}
\begin{proof} From above we have
$$\left({\mathbb A}_\tau u-\tau {\mathbb B}u,\,u\right)_{H^2_0} \geq\left(\gamma-\frac{\gamma^2}{\sigma}\right)\|\Delta u\|_{L^2}^2+ (1+\gamma-\sigma)\|u\|_{L^2}^2-\tau\|\nabla u\|^2_{L^2}~.$$
Using the Poincar\'e inequality  for $\nabla u\in H^1_0(D)$ we get $\|\nabla u\|^2_{L^2(D)}\leq \frac{1}{\lambda_1(D)} \|\Delta u\|^2_{L^2(D)}$, and so
$$
\left({\mathbb A}_\tau u-\tau {\mathbb B}u,\,u\right)_{H^2} \geq \left(\gamma-\frac{\gamma^2}{\sigma}- \frac{\tau}{\lambda_1(D)}\right)\|\Delta u\|_{L^2}^2+ \tau(1+\gamma-\sigma)\|u\|_{L^2}^2.\nonumber
$$
Hence  ${\mathbb A}_\tau -\tau {\mathbb B}$ is positive as long as
$\tau<(\gamma-\frac{\gamma^2}{\sigma})\lambda_1(D)$. We recall that $\gamma =\frac{1}{n^{*}-1}$;  a choice of $\sigma$ arbitrarily close to $\gamma+1$ therefore results in the requirement that $\tau<\frac{\gamma}{1+\gamma}\lambda_1(D) = \frac{\lambda_1(D)}{n^{*}}$.
\end{proof}
\noindent
The formulation (\ref{tep}) suggests to consider the generalized eigenvalue problem 
\begin{equation}\label{tep2}
({\mathbb A}_\tau-\lambda(\tau) {\mathbb B})u=0~,
\end{equation}
which for fixed $\tau>0$ is known to have an infinite sequence of eigenvalues $\lambda_j>0$ accumulating at $+\infty$. For  fixed $\tau>0$ these satisfy the Courant-Fischer min-max principle (see e.g. \cite[Section 4.1]{CakoniColtonHaddar2016})
$$\lambda_j(\tau) = \min_{W \in {\mathcal U}_j} \max_{u \in W \setminus \{0\}}
   \frac{({\mathbb A}_\tau u, u)}{({\mathbb B} u, u)}~,$$
where ${\mathcal U}_j$ denotes the set of all $j$-dimensional subspaces $W$ of $H^2_0(D)$.  In particular, each $\lambda_j(\tau)$ for $j\in {\mathbb N}$ is a continuous function of $\tau$. A value $\tau>0$  corresponds to a transmission eigenvalue if it solves $$\lambda_j(\tau)-\tau=0~.$$
Proposition \ref{prop1} implies that
$$\lambda_j(\tau_0)-\tau_0>0, \qquad \mbox{for any $0<\tau_0<\frac{\lambda_1(D)}{n^{*}}$ and all $j\in {\mathbb N}$~.}$$ If we show that ${\mathbb A}_{\tau_1} - \tau_1 {\mathbb B}$ is non-positive on a $\ell$-dimensional subspace ${\mathcal W}$ of $H^2_0(D)$, which means that $\lambda_j(\tau_1)-\tau_1\le 0$, $j={1 \dots \ell}$,  then we can conclude that each $\lambda_j(\tau)-\tau$ has at least one zero in $(\tau_0, \tau_1]$, hence  there are at least $\ell$ real transmission eigenvalues $k>0$ (counting multiplicity) such that  $\tau_0<k^2\leq \tau_1$.  To show this, let  $k_{1,n_{*}}$ be the first transmission eigenvalue for the unit ball $B_1$ and  $n(r):=n_{*}$ constant. By a scaling $k_{\epsilon,n_{*}}:=k_{1,n_{*}}/\epsilon$ is the first transmission eigenvalue the ball of radius $\epsilon>0$. Given an integer $\ell\ge 1$ choose $\epsilon(\ell)>0$ sufficiently small  that $D$ compactly contains
$\ell$  disjoint balls $B_{\epsilon}^i$, $i=1\dots \ell$ of radius $\epsilon$.  Consider the transmission eigenfunction  $u^{B^i_\epsilon,n_{*}}\in H^2_0(B^i_\epsilon)$  of each ball  corresponding to the same eigenvalue $k_{\epsilon,n_{*}}$. The extension by zero $\tilde u^i$
of $u^{B^i_\epsilon,n_{*}}$ to the whole of $D$ is obviously in $H^2_0(D)$, due to zero Cauchy data on $\partial B^i_\epsilon$.  The set  $\{\tilde u^1, \tilde u^2, \dots, \tilde u^\ell\}$ is linearly independent and orthogonal in $H^2_0(D)$ since $\tilde u^i$  have disjoint supports, and we denote by ${\mathcal U}$ the $\ell$-dimensional subspace of $H^2_0(D)$ spanned by this set. For $\tau_1:=k_{\epsilon,n_{*}}^2$ and for every $\tilde u\in {\mathcal U}$
\begin{eqnarray*}
\left({\mathbb A}_{\tau_1} \tilde u-\tau_1 {\mathbb B} \tilde u,\,\tilde u\right)_{H^2(D)} &=&\int\limits_{D}\frac{1}{n-1}|\Delta  \tilde u+\tau_1 \tilde u|^2\, dx +\tau_1^2\int\limits_{D}|\tilde u|^2\,dx-\tau_1\int\limits_D|\nabla \tilde u|^2\,dx\nonumber\\
&&\hspace*{-3cm}\leq \int\limits_{D}\frac{1}{n_{*}-1}|\Delta  \tilde u+\tau_1 \tilde
u|^2\, dx+\tau_1^2\int\limits_{D}|\tilde
u|^2\,dx-\tau_1\int\limits_D|\nabla \tilde u|^2\,dx\!=\!0~,
\end{eqnarray*}
where we use the equation  satisfied by the eigenpairs $(k_{\epsilon,n_{*}}\,,\,u^{B^i_\epsilon,n_{*}})$, for $i=1,\dots, \ell$.  We note that 
and $k_{\epsilon,n_{*}}$  goes to $+\infty$ as $\epsilon \to 0$, and $\epsilon$ goes to zero as $\ell\rightarrow +\infty$. Since the multiplicity of each transmission eigenvalue is finite and since the only possible accumulation point is $\infty$, we have shown, by letting $\ell \to +\infty$,  that there exists an infinite countable set of real transmission eigenvalues that accumulate at $+\infty$. In other words: we have shown that for  $\partial D$ Lipschitz and  $n\in L^\infty(D)$ such that  $n_{*} = \inf_D  n>1$, there exists an infinite set of real  transmission eigenvalues $k>0$  that accumulate only at $+\infty$. 

\noindent
The assumption that the contrast $n-1$ is one sign uniformly in $D$ is not absolutely necessary. We observe that the transmission eigenvalue problem (\ref{ten2}) can be recast as finding the values of $k$ for which Kern$({\mathcal D}_{k,n}-{\mathcal D}_{k,1})$ is non-trivial, where  ${\mathcal D}_{k,n},{\mathcal D}_{k,1}$ are the Dirichlet-to-Neumann operators for $(n,D)$ and $(1,D)$ defined by
$${\mathcal D}_{k,q}: f\mapsto \frac{\partial u_f}{\partial \nu}, \qquad \mbox{where \qquad $\Delta u_f +k^2qu_f=0$  in $D$, \quad $u_f=f$ on $\partial D$}.$$
This suggests that the sign of $n-1$ in a neighborhood  of the boundary $\partial D$, is what matters. We summarize below the state-of-the-art results  in this regard. We introduce the boundary neighborhood  $D_\delta\subset D$, defined  by 
\begin{equation}\label{neib}
D_\delta:=\left\{x\in D, \; \mbox{dist}(x,\partial D)<\delta\right\}\;\; \;\;\; \mbox{for some fixed $\delta>0$}~,
\end{equation}
and
$$n_{\star}:= \inf_{D_\delta} n \qquad \mbox{and}\qquad n^{\star}:= \sup_{D_\delta}  n~.$$
\begin{enumerate}
\item If $\partial D$ is Lipschitz,  $n\in L^\infty(D)$ is positive, and either $n_{\star}>1$ or $n^{\star}<1$, for some neighborhood $D_\delta$, then the set of transmission eigenvalues in  ${\mathbb C}$  is discrete without interior accumulation points. Each eigenvalue has finite multiplicity and the resolvent of the transmission eigenvalue problem is Fredholm. For the proof of these results we refer the reader to \cite{kirsch2} \cite{syl}.
\item  Under stronger regularity assumptions, more precisely for  $\partial D$ of class $C^3$ and $n\in C^1(\overline D)$ such that $n(x)\neq 1$ for $x\in \partial D$, \cite{nguyen2} proves that the set of generalized transmission eigen-pairs $(u,v)$ is complete in $L^2(D)\times L^2(D)$. It is shown in  \cite{vodev} that all transmission eigenvalues lie in a strip around the real axis. These papers also establish Weyl  estimates for the counting function of transmission eigenvalues, showing that the number of all eigenvalues inside the disk of radius $r$ is asymptotically of order {\hh{$r^{d/2}$}} with an explicit constant depending only on $D,n$. 
\end{enumerate}
The analysis mentioned in the above two items refers to all transmission eigenvalues, real or complex. Again, in connection with non-scattering it is important to know when real  transmission eigenvalues exist. The approach described above to prove the existence of real  transmission eigenvalues was adapted  in \cite{CCH} to the case when $\partial D$ is Lipschitz,  $n\in L^\infty(D)$ is positive, either $n_{\star}>1$ or $n^{\star}<1$ in $D_\delta$, and $n\equiv 1$ in $D\setminus \overline{{D_\delta}}$. In this case, the fourth order variational formulation is kept in ${D_\delta}$, and the Helmholtz equation satisfied by $u:=w-v$ in $D\setminus \overline{D_\delta}$ is  enforced through the space on which the operators are defined.  More precisely, the transmission eigenvalue problem is formulated in variational form as finding $u\in V_0(D,D_\delta,k)$ such that 
$$\int_{{D_\delta}} \frac{1}{n-1}\left(\Delta u
 +k^2 u \right)\,\left(\Delta \bar \psi +k^2 \bar \psi \right)\, dx +k^2\int_{{D_\delta}} (\Delta u
+ k^2 u)\, \bar \psi\, dx = 0$$
for all $\psi \in V_0(D,D_0,k)$, where the Hilbert space $V_0(D,D_\delta,k)$ is defined by 
$$V_0(D,D_\delta,k):=\left\{u\in H^2_0(D) \quad \mbox{such that  $\Delta u+k^2u=0$ in $D\setminus \overline{D_\delta}$}\, \right\}.$$ 
Recently, this idea is generalized  in \cite{CH2} to prove the existence of real transmission eigenvalues without any assumption on the contrast $n-1$ in $D\setminus \overline{D_\delta}$. The following theorem states this most recent result on the existence of real transmission eigenvalues.
\begin{theorem}
Assume that $\partial D$ is Lipschitz,  $n\in L^\infty(D)$ is positive, and either $n_{\star}>1$ or $n^{\star}<1$, in some $D_\delta$, $\delta>0$. Then there exists a countable infinite set of real  transmission eigenvalues $k>0$  that accumulate only at $+\infty$. 
\end{theorem}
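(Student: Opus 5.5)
The plan is to adapt the min-max argument given above for the case $n_*>1$, following the strategy of \cite{CCH} and \cite{CH2}: keep a fourth-order (coercive) formulation only in the boundary layer $D_\delta$, and handle the interior $D\setminus\overline{D_\delta}$ through the solution space. We treat $n_\star>1$; the case $n^\star<1$ is symmetric, after exchanging the roles of $u$ and $v$ and changing signs as in the classical argument.

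\emph{Step 1 (reduction to the layer).} Introduce a slightly smaller layer $D_{\delta'}$, $0<\delta'<\delta$. Eliminate the interior unknowns: for $k^2$ outside a discrete set, the interior problem $\Delta u+k^2nu=0$, $\Delta v+k^2v=0$ in $D\setminus\overline{D_{\delta'}}$ is uniquely solvable from the data on the inner boundary. This defines Dirichlet-to-Neumann (or Robin) maps $\mathcal D^{\rm int}_{k,n}$ and $\mathcal D^{\rm int}_{k,1}$. The transmission problem then becomes a problem for $w=u-v\in H^2(D_{\delta'})$ with zero Cauchy data on $\partial D$. Its interior boundary conditions couple $w$ to $v$ through $\mathcal D^{\rm int}_{k,n}-\mathcal D^{\rm int}_{k,1}$, which is smoothing because the two operators differ by a compact operator. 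The resulting form is the layer fourth-order form
\[
\int_{D_{\delta'}}\tfrac{1}{n-1}(\Delta w+k^2 w)\overline{(\Delta\psi+k^2\psi)}\,dx
\]
plus $k^2$ lower-order terms plus a term that is compact in $w$. To avoid the exceptional interior eigenvalues, I would use a Robin-type or impedance elimination as in \cite{CH2}, so that the maps are analytic in $\tau=k^2$ on all of $(0,\infty)$.

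\emph{Step 2 (an analytic Fredholm family, selfadjoint for real $\tau$).} The goal is to write the problem as $(\mathbb A_\tau-\tau\mathbb B_\tau)w=0$ on a Hilbert space $V$, with $\mathbb A_\tau$ coercive and selfadjoint and $\mathbb B_\tau$ compact and selfadjoint, both depending continuously on real $\tau>0$. Coercivity in the layer follows from $n_\star>1$, exactly as in the estimate preceding Proposition \ref{prop1}. The compact perturbations are selfadjoint for real $\tau$ because the interior DtN maps are symmetric for real $k$. As before, consider $\lambda_j(\tau)$, defined by Courant--Fischer, and look for zeros of $\lambda_j(\tau)-\tau$.

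\emph{Step 3 (the two sign conditions).} For small $\tau$ we need $\mathbb A_\tau-\tau\mathbb B_\tau$ to be positive. Here the interior DtN contributions have to be controlled. For small $k$ they are close to their $k=0$ values, and their difference is $O(k^2)$ in a norm weak enough to be absorbed by the coercive layer term; this part also needs $n>0$. For large $\tau$ we reuse the ball construction: place $\ell$ disjoint small balls $B^i_\epsilon\subset D_{\delta'}$ (this is possible since the layer has positive width) and use the zero extensions of the transmission eigenfunctions for constant index $n_\star$. These have compact support inside the layer, so they do not see the interior terms, and the computation above shows non-positivity on an $\ell$-dimensional subspace. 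Continuity in $\tau$ then produces $\ell$ eigenvalues for every $\ell$. Discreteness, finite multiplicity, and accumulation only at $\infty$ come from item 1 of the list above (\cite{kirsch2,syl}), so letting $\ell\to\infty$ gives infinitely many real eigenvalues.

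The main obstacle is Steps 1 and 2 when there is no sign assumption in the interior. One needs a reformulation whose operators are selfadjoint for real $\tau$, continuous in $\tau$ across the interior Dirichlet eigenvalues, and whose compact part does not destroy the small-$\tau$ positivity. I would first try the $V_0(D,D_\delta,k)$-type space of \cite{CCH}, generalized to the constraint $u-v\in H^2_0$ with $u,v$ solving their interior equations. If the dependence on $\tau$ of this space is not regular enough, I would fall back on the impedance-DtN elimination described in Step 1.
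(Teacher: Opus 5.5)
You have the right architecture; it is the one the paper points to, namely \cite{CCH} generalized in \cite{CH2}: keep the fourth-order form only on the boundary layer, handle the interior through the space, and run min--max with small balls inside the layer. The ball step is fine, because the zero extensions satisfy any interior constraint for every $k$.

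\textbf{What is missing.} The content of the theorem is exactly the step you defer. You need a formulation, valid for arbitrary $n$ in $D\setminus\overline{D_\delta}$, that is Hermitian for real $\tau$, continuous in $\tau$, and has a coercive principal part. Your proposal does not construct one, and both routes you sketch fail.

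\textbf{Step 1.} Eliminating the interior gives two conditions on the inner interface $\Gamma$: $\partial_\nu u=\mathcal D^{\mathrm{int}}_{k,n}u$ and $\partial_\nu v=\mathcal D^{\mathrm{int}}_{k,1}v$. In the layer, $v=-(\Delta w+k^2nw)/(k^2(n-1))$ is only an $L^2$ expression in $w$. Hence $v|_\Gamma$ and $\partial_\nu v|_\Gamma$ are not controlled by $\|w\|_{H^2(D_{\delta'})}$, and the reduced problem is not ``the layer form plus a compact term.'' In \cite{CCH} the $\Gamma$-terms vanish only because trial and test functions are required to satisfy the interior equation, and Green's identity in the interior then cancels them. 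This works because for $n\equiv 1$ there, $u-v$ itself solves the Helmholtz equation, so the constraint falls on a single unknown in $H^2_0(D)$. For general $n$ inside, $u-v$ satisfies no closed equation there, and that is the new difficulty.

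\textbf{Your fallback.} It contradicts Step 2. A fixed real Robin parameter still has infinitely many real exceptional values of $k^2$. A complex impedance removes them, but then the reduced operators are not selfadjoint for real $k$, so Courant--Fischer is unavailable. Either way, continuity of $\lambda_j(\tau)$ across interior eigenvalues is not secured.

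\textbf{Step 3.} The small-$\tau$ argument is also wrong as stated. The interior coupling acts on $v$, which is of size $k^{-2}$ relative to $w$, so an $O(k^2)$ difference of interior DtN maps contributes at order one. Green's identity gives
\[
\langle(\mathcal D_{k,n}-\mathcal D_{k,1})f,f\rangle=-k^2\int_D(n-1)\,u_f\,\overline{v_f}\,dx ,
\]
whose normalized limit as $k\to 0$ is $-\int_D(n-1)|h_f|^2\,dx$, where $h_f$ is the harmonic extension of $f$. Under your hypotheses this limit can be indefinite:
\begin{itemize}
\item for $f\equiv 1$ it equals $-\int_D(n-1)\,dx$, which is positive whenever $\int_D(n-1)\,dx<0$, and the hypotheses allow this;
\item for highly oscillatory $f$, $h_f$ concentrates in $D_\delta$ and the sign is reversed.
\end{itemize}
So you should not expect full positivity for small $\tau$, and your absorption argument cannot deliver it. What you can aim for, and what suffices, is positivity on a subspace of finite codimension $N_0$. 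This should follow from compactness of the interior contribution relative to the layer. It gives $\lambda_j(\tau_0)>\tau_0$ only for $j>N_0$, hence at least $\ell-N_0$ eigenvalues in $(\tau_0,\tau_1]$, which still yields infinitely many as $\ell\to\infty$.
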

\noindent
A long standing open problem related to the transmission eigenvalue problem is to understand its spectrum  without any conditions on the contrast $n-1$ in $D$.  Section 7 in \cite{uhl} explains the connection of this question with the unique determination of the sound speed and initial source in photo-acoustic tomography.

\noindent
We conclude this section by addressing the question of why we do not consider a complex-valued refractive index, which models absorbing and dispersive media.
\begin{proposition} \label{theow0} If  $n\in L^{\infty}(D)$ with $\Im(n)>0$ (or $\Im(n)<0$) in a region $D_0\subset D$ with positive measure,  then there are no {\em real} transmission eigenvalues, hence such inhomogeneities always scatter.
\end{proposition}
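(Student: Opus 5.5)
Suppose $k>0$ is a real transmission eigenvalue with eigenfunction $(u,v)$ solving (\ref{ten2}), so $u,v\in L^2(D)$, $w:=u-v\in H^2_0(D)$, $\Delta u+k^2nu=0$ and $\Delta v+k^2v=0$ in $D$. The plan is a Green's identity (energy) argument that uses only that $k^2$ is real, and then a unique continuation argument to show $u=v=0$.

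\emph{Step 1: $u\equiv 0$ on $D_0$.} Since $w\in H^2_0(D)$, $\Delta w = -k^2nu + k^2 v$. Pair $\Delta u+k^2nu=0$ with $\bar v$ and $\Delta v+k^2v=0$ with $\bar u$. Formally, Green's second identity together with the vanishing Cauchy data of $u-v$ gives
\[
\int_D (\Delta u\,\bar u-u\,\Delta\bar u)\,dx=\int_D(\Delta w\,\bar u - w\,\Delta \bar u)\,dx+\int_D(\Delta v\,\bar u - v\,\Delta\bar u)\,dx .
\]
To make this rigorous at the $L^2$ level, I would work with $w$ directly and use the weak formulations: test the $v$-equation against $\Delta\bar w+k^2\bar w$ and the $u$-equation against $\bar w$. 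After integration by parts, justified because $w\in H^2_0(D)$, every term except one is real or cancels in conjugate pairs, and we arrive at
\[
\Im\!\left(k^2\int_D n|u|^2\,dx\right)=0,\qquad\text{i.e.}\qquad k^2\int_D \Im(n)\,|u|^2\,dx=0 .
\]
A cleaner way to obtain this is to note that $\int_D(\Delta u\,\bar v-u\,\Delta\bar v)\,dx$ and its variants vanish because $u$ and $v$ share Cauchy data. Since $k$ is real, the only complex coefficient is $n$. If $\Im n$ has one sign on $D$, this forces $u=0$ a.e.\ on $D_0$.

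\emph{Step 2: unique continuation.} On $D_0$ we have $u=0$, so $w=-v$ there. Then $w$ satisfies $\Delta w+k^2w=0$ on $D_0$, and $w$ is also an $H^2$ function. Here I would use the equation $\Delta w+k^2nw=k^2(1-n)v$ and view $(w,v)$ as a coupled elliptic system. I would extend $w$ by zero outside $D$; it lies in $H^2(\mathbb R^d)$ because its Cauchy data vanish. Then $u=v+w$ satisfies $\Delta u+k^2nu=0$ in $D$, and $u$ vanishes on the open set $D_0$ (if $D_0$ has interior; otherwise I would use positive measure together with a measurable unique continuation result). Unique continuation for $\Delta+k^2 n$ with $L^\infty$ potential, valid in $d=2,3$, then gives $u\equiv0$ in $D$ if $D$ is connected. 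Consequently $v=-w\in H^2_0(D)$ solves $\Delta v+k^2v=0$ with zero Cauchy data. Extending $v$ by zero gives a compactly supported solution of the Helmholtz equation in $\mathbb R^d$, which by unique continuation is $v\equiv0$. So the eigenfunction is trivial, a contradiction. Since every non-scattering wave number is a real transmission eigenvalue, the medium scatters.

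\emph{Main obstacle.} The delicate step is Step 1: making the Green's identity rigorous when $u,v$ are only in $L^2$ and obtaining exactly $\int\Im(n)|u|^2=0$ with no stray boundary terms. I would handle this by writing everything in terms of $w\in H^2_0$, using the identity $\int_D(\Delta w\,\bar\phi-w\,\Delta\bar\phi)\,dx=0$ for $\phi\in L^2$ with $\Delta\phi\in L^2$, a distributional fact justified by density of $C_0^\infty$ in $H^2_0$. A secondary issue is that Step 1 needs $\Im n$ of one sign where it is nonzero, i.e.\ $\Im n\ge0$ throughout $D$ with strict inequality on $D_0$. I would state this as the intended hypothesis, since it is the physically meaningful absorbing case.
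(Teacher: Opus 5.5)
Your proposal is correct and follows the paper's argument. An energy identity whose imaginary part gives $k^2\int_D \Im(n)\,|u|^2\,dx=0$ forces $u=0$ on $D_0$; unique continuation then gives $u\equiv 0$, so $v$ has zero Cauchy data and vanishes. The one difference is that you derive the identity through Green's second identity against $w=u-v\in H^2_0(D)$, whereas the paper uses the first identity with $|\nabla u|^2$ and $|\nabla v|^2$; your version is better suited to $u,v\in L^2(D)$. You also rightly state that $\Im(n)$ must have one sign on all of $D$, which the paper assumes without saying so.
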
 
\begin{proof} 
We multiply the first equation in  (\ref{ten2})  by $\overline{u}$ and the second equation by $\overline{v}$, subtract the two and then integrate by parts to obtain
$$\int_D\left(-|\nabla u|^2\,dx+k^2n|u|^2\right) \, dx + \int_D\left(|\nabla v|^2\,dx-k^2|v|^2\right)\,dx=0$$
From the imaginary part we obtain that $u=0$ in $D_0$ and then by unique continuation it follows that $u=0$ in all of $D$. The conditions on the boundary imply that $v$ has zero Cauchy data hence $v=0$ in $D$ as well.
\end{proof}
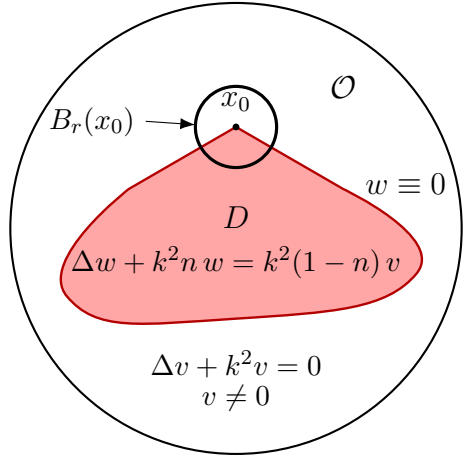
\begin{figure}[h!]
\centering
\begin{tikzpicture}[scale=1.5]

\def\Rout{2.0}     
\def\rball{0.36}   

\coordinate (O) at (0,0);        
\coordinate (X) at (0,0.90);     

\draw[thick] (O) circle (\Rout);

\node at (0.95,1.25) {$\mathcal{O}$};
\node at (0,-1.20) {{\small{$\Delta v+ k^{2}v =0$}}};
\node at (0,-1.50) {{\small{$v\neq 0$}}};
\coordinate (Rmid) at ( 0.95, 0.35);
\coordinate (R1)   at ( 1.60,-0.40);
\coordinate (Base) at ( 0.00,-0.80);
\coordinate (L1)   at (-1.50,-0.60);
\coordinate (Lmid) at (-0.95, 0.35);

\filldraw[fill=red!35, draw=red!70!black, line width=0.9pt]
  (X) -- plot[smooth, tension=1] coordinates { (Rmid) (R1) (Base) (L1) (Lmid) } -- cycle;

\node at (0,-0.30) {{\small{$\Delta w + k^{2} n\, w = k^{2}(1-n)\, v$}}};
\node at (0,0.10) {$D$};

\draw[very thick] (X) circle (\rball);
\fill (X) circle (0.03);
\node[above=2pt] at (X) {$x_0$};

\node[left] (BR) at ($(X)+(-0.80,0.05)$) {{\small{$B_r(x_0)$}}};
\draw[-{Latex[length=2mm]}] (BR.east) -- ($(X)+(-\rball,0.02)$);

\node[right] at (1.05,0.40) {$w \equiv 0$};

\end{tikzpicture}
\caption{Sketch of a non-scattering configuration.}
\label{fig:domain}
\end{figure}
\subsection{Singularities almost always scatter}
Knowing that real transmission eigenvalues exist, a natural question becomes: under what additional assumptions can these transmission eigenvalues be non-scattering wave numbers? In particular, when is the $v$ part of the transmission eigenfunction extendable outside as a (special) solution to the Helmholtz equation, so that (\ref{nonsn1})-(\ref{nonsn3}) holds. This question may be formulated slightly differently depending on the domain of definition one requires for this extension. If the domain of definition is required to be all of $\mathbb{R}^d$ (modulo a set of measure zero) one may also ask whether the $v$ part of the transmission eigenfunction could take the form of one of the special (physically common) incident waves. Our first approach to this question relies on viewing the boundary with vanishing Cauchy data as a free boundary and applying free boundary regularity results for second-order elliptic equations. The connection between the non-scattering property of a given inhomogeneity and the regularity of a free boundary was first introduced in \cite{CV} and \cite{fb7}. Here, we state the results and sketch the main ideas of the proof  following \cite{CV}.  These results  are stated in terms of sufficient conditions of non-smoothness of $\partial D$ for scattering to occur for a given incident wave. By negation they could as well have been stated as necessary smoothness properties of $\partial D$ that follow from non-scattering.   There is a striking similarity in the mathematical structure of the problem of non-scattering inhomogeneities, and the problem of domains that do not possess the  Schiffer or Pompeiu property \cite{pom0, pom2, pom1}.  Regularity properties of the latter were established by Williams \cite{pom3}, and the analysis\cite{CV} in several places borrows significantly from his original work.  In the formulation of our main results we refer to the region $D_\delta\subset D$, defined  by 
$$D_\delta:=\left\{x\in D, \; dist(x,\partial D)<\delta\right\}\;\; \;\;\; \mbox{for some fixed $\delta>0$}.$$
Again, for simplicity of notation,  we refer to  $n_D$ inside $D$ as $n$. 
\begin{theorem}\label{scatnon1} Let $k>0$ be a fixed wave number, let  $n$ in $L^\infty(D)$ be  the positive refractive index, and suppose that the boundary $\partial D$ is Lipschitz. Consider a nontrivial incident field $v$ satisfying (\ref{nonsn1}).  
\begin{enumerate}
\item Assume that $n\in C^{m, \mu}(\overline{D_\delta})\cap C^{1,1}(\overline{D_\delta})$  for $m \geq 1$, $0<\mu<1$, and there exists $x_0\in \partial D$ such that $(n(x_0)-1)v(x_0)\neq 0$. If  $\partial D\cap B_r(x_0)$  is not of class $C^{m+1, \mu}$ for any ball $B_r(x_0)$ of radius $r$ centered at $x_0$,  then the incident field $v$ is scattered by the inhomogeneity $(D, n)$. In other words: there exists no $H^2_0(D)$ solution to (\ref{nonsn2})-(\ref{nonsn3}).
\item Assume  that  $n$ is real analytic in $\overline{D_\delta}$, and there exists $x_0\in\partial D$   such $(n(x_0)-1)v(x_0)\neq 0$. If $\partial D\cap B_r(x_0)$ is not real analytic for any ball $B_r(x_0)$ of radius $r$ centered at $x_0$,  then the incident field $v$ is scattered by the inhomogeneity $(D, n)$. In other words: there exists no $H^2_0(D)$ solution to to (\ref{nonsn2})-(\ref{nonsn3}).
\end{enumerate}
\end{theorem}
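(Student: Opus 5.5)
We argue by contradiction: assume $w\in H^2_0(D)$ solves (\ref{nonsn2})-(\ref{nonsn3}), extend it by zero outside $D$, and read the result as a free boundary problem near $x_0$.

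\textbf{Setting up the free boundary problem.} Since $w$ has vanishing Cauchy data on $\partial D$, its extension by zero, still called $w$, lies in $H^2_{\loc}(B_r(x_0))$. It satisfies
$$\Delta w + k^2 n w = k^2(1-n)v\,\chi_D \quad \mbox{in } B_r(x_0),$$
where $n$ is extended by $1$ outside $D$. Set $f:=k^2(1-n)v$ near $x_0$. By continuity and the hypothesis $(n(x_0)-1)v(x_0)\neq 0$, $f$ has a fixed sign in a small ball $B_r(x_0)$, say after replacing $w$ by $-w$ we have $f\ge c>0$. The function $v$ is real analytic in $\mathcal O$, so $f$ is $C^{m,\mu}\cap C^{1,1}$ (respectively analytic) near $x_0$. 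Here $f$ is built from $n|_D$ extended to a neighborhood, using the regularity of $n$ on $\overline{D_\delta}$. Since $w\in H^2$, elliptic regularity gives $w\in C^{1,\alpha}$, and in fact $W^{2,p}$ for all $p<\infty$.

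This is a no-sign obstacle problem of the type studied by Caffarelli, Karp, Shahgholian and Uraltseva: $\Delta w + k^2 n w = f\chi_{\Omega}$ with $w=|\nabla w|=0$ on $B_r\setminus\Omega$, where $\Omega = D$. The lower order term $k^2 n w$ is treated as a perturbation. Rewrite the equation as $\Delta w = (f - k^2 n w)\chi_D$, noting that $w=0$ outside $D$. The right-hand side equals $g\chi_D$ with $g:=f-k^2 n w$. Then $g(x_0)=f(x_0)\neq 0$, and $g$ is Lipschitz because $w\in C^{1,\alpha}$ and $n\in C^{1,1}$.

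\textbf{Initial regularity of the free boundary.} The first step is to show that $w\in C^{1,1}$ near $x_0$ (optimal regularity for this class). The second step is to show that $\partial D$ is $C^{1,\alpha}$ near $x_0$. This step is the main obstacle. The free boundary regularity theory gives $C^1$ regularity only at regular points, meaning points where blow-ups are half-space solutions $\tfrac{g(x_0)}{2}((x\cdot e)^+)^2$. It must exclude singular points, where blow-ups are polynomial, as well as thin complement situations.

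Here the Lipschitz hypothesis on $\partial D$ is decisive. Near $x_0$ the zero set $\mathbb R^d\setminus D$ contains a cone with nonempty interior at every boundary point. This uniform density of the coincidence set forces every blow-up to be a half-space solution, because a homogeneous quadratic polynomial solution cannot vanish on an open cone while having Laplacian $g(x_0)$ on the complementary cone. Concretely, I would first try the Caffarelli--Shahgholian--Uraltseva result that thickness of the complement implies a $C^1$ free boundary, and then upgrade to $C^{1,\alpha}$. As a fallback, I would follow Williams' argument for the Pompeiu problem, as in \cite{CV}.

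\textbf{Higher regularity.} Once $\partial D\cap B_r(x_0)$ is $C^{1,\alpha}$, apply the Kinderlehrer--Nirenberg partial hodograph--Legendre transform. Near $x_0$, with normal direction $e_d$, use the coordinates $y=(x',\partial_d w)$. In these coordinates the free boundary becomes flat and the Legendre transform $\psi$ satisfies a nonlinear elliptic equation with fully nonlinear oblique-type boundary data, whose coefficients involve $n$ and $v$. We then bootstrap. The coefficient regularity $n\in C^{m,\mu}$ gives $\psi\in C^{m+1,\mu}$ up to the flat boundary (Agmon--Douglis--Nirenberg Schauder estimates), hence $\partial D\in C^{m+1,\mu}$ near $x_0$. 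If $n$ is analytic, the analytic version of Morrey's theorem gives $\partial D$ real analytic near $x_0$.

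In either case this contradicts the assumption on $\partial D\cap B_r(x_0)$. Hence no $H^2_0(D)$ solution exists, and $v$ is scattered.
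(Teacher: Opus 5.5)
Your argument takes a genuinely different route from the paper's, but one step fails as written (complex-valued $v$, see the second paragraph).

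\textbf{The two routes.} The paper follows \cite{CV} and Williams' Pompeiu analysis \cite{pom3}. It first gets $w\in C^{1,1}(\overline D\cap B_r(x_0))$ from a bound on the symmetric jumps of the second derivatives of the volume potential (\ref{vol}). It then proves in Proposition~\ref{prep2} that $w$ has a strict sign near $x_0$. Only then does it apply Caffarelli's result for the classical, signed obstacle problem (Lemma~\ref{th2}), and it finishes with Kinderlehrer--Nirenberg. You instead treat the zero extension of $w$ as a solution of the \emph{no-sign} obstacle problem $\Delta w=g\chi_D$, $w=|\nabla w|=0$ off $D$. You then invoke Caffarelli--Karp--Shahgholian type theory: optimal $C^{1,1}$ regularity with no sign assumption, and a $C^1$ free boundary when the zero set is thick, which the exterior cone of a Lipschitz domain supplies. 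This is close to the approach of \cite{fb7}.

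\textbf{What each route buys.} Yours avoids both problem-specific estimates: no jump lemma for the potential, and no one-sign result, which is the only place the paper uses $n\in C^{1,1}$. Its only geometric input is thickness of $\R^d\setminus D$ at $x_0$, which is what lets one go beyond Lipschitz boundaries. The price is heavier machinery, which you must quote for a variable Lipschitz right-hand side $g$ that depends on $w$ itself.

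\textbf{Where you lean on the cited theorem.} Your blow-up heuristic does not replace the thickness theorem. Excluding polynomial blow-ups is only one ingredient. You also need homogeneity of blow-ups and nondegeneracy, which here comes from the interior cone of $D$. You further need the classification of homogeneous global solutions whose zero set has interior, and a uniform argument to obtain a $C^1$ graph. In addition, the hodograph step requires $w\in C^2(\overline D\cap B_r(x_0))$, not just $C^{1,1}$. Once $\partial D$ is $C^{1,\alpha}$, you get this by applying boundary estimates to $\partial_j w$, which has zero Dirichlet data and bounded Laplacian in $D$.

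\textbf{The failing step.} The incident field $v$ is in general complex-valued; plane waves $e^{ikx\cdot\eta}$ and point sources are examples. So $f=k^2(1-n)v$ has no sign, and replacing $w$ by $-w$ does not help. Both the classical and the no-sign obstacle theories apply only to real-valued functions. You must first pass to $\Re w$ or $\Im w$. Since $n$ and $k$ are real, each lies in $H^2_0(D)$ and solves (\ref{nonsn2}) with $v$ replaced by $\Re v$, respectively $\Im v$; compare (\ref{real1}). The hypothesis $(n(x_0)-1)v(x_0)\neq 0$ guarantees that $\Re v(x_0)\neq 0$ or $\Im v(x_0)\neq 0$. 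Work with the corresponding real function; the no-sign problem is then posed with the same set $\Omega=D$, so its free boundary is still $\partial D$. With this repair your route goes through.
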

\noindent
\begin{remark}
{\em The smoothness on $n$ in Theorem \ref{scatnon1} is only needed locally  in $\overline{D}\cap B_R(x_0)$.  In our particular application the solution $w$ is defined on all of $D$, but since the regularity of the free boundary is a local property, the results of Theorem \ref{scatnon1}  hold for $w$ solving  (\ref{nonsn2})-(\ref{nonsn3}) in $D \cap B_R(x_0)$ with zero Cauchy data only on $\partial D\cap B_R(x_0)$.}
\end{remark}
\noindent
If  the initial regularity of $\partial D$ is  $C^1$ and  $w\in C^2({\overline D}\cap B_R(x_0))$, Theorem \ref{scatnon1} is an immediate consequence of the regularity results  due to Kinderlehrer and Nirenberg  in  \cite[Theorem 1' on page 377]{pom000}.  However we initially assume that  $\partial D$ is only Lipschitz regular. Therefore,  we must first show that the free boundary $\partial D\cap B_R(x_0)$ is indeed $C^1$, and then verify that the (non-scattering) solution $w$ to (\ref{nonsn2})-(\ref{nonsn3}) is in $C^2({\overline D}\cap B_R(x_0))$.  This intermediate regularity is achieved with the help of a  classical result on regularity of the free boundary due to Caffarelli \cite[Section 1.2 and Theorem 3 on page 166]{pom00}, which we state in the following lemma, modified to the framework of our problem. This result refers to  a function $w$  that locally satisfies 
\begin{equation}\label{caf}
\Delta w=g  \quad \mbox{in } \, D\cap B_R(x_0), \qquad \mbox{such that $w=\displaystyle{\frac{\partial w}{\partial \nu}}=0 \quad \mbox{on } \, \partial D\cap B_R(x_0)$}.
\end{equation}
\begin{lemma}\label{th2} Suppose that $\partial D\cap B_R(x_0)$ is Lipschitz and $w$ satisfying (\ref{caf}) is in $C^{1,1}(\overline{D}\cap B_R(x_0))$. Furthermore, assume that  $w\leq 0$ in $D\cap B_R(x_0)$, and  $g$ has a $C^1$-extension $g^*$ in a neighborhood of  $\overline{D}\cap B_R(x_0)$  such that $g^*\leq -\alpha<0$. Then there exists $R'<R$ such that $\partial D\cap B_{R'}(x_0)$ is of class $C^1$ and $w \in C^2(\overline{D}\cap B_{R'}(x_0))$.
\end{lemma}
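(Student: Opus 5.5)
This lemma is a reformulation of Caffarelli's regularity theorem for the obstacle (no-sign) free boundary problem, and I would prove it by reducing to that setting. Set $u:=-w$ and extend it by zero to the complement of $D$ in $B_R(x_0)$. The zero Cauchy data on $\partial D\cap B_R(x_0)$, together with $w\in C^{1,1}(\overline D\cap B_R(x_0))$, imply that the extension is in $C^{1,1}(B_R(x_0))$. Indeed, $u$ and $\nabla u$ vanish on the interface, so the gradient is Lipschitz across it; on a Lipschitz boundary one checks this by joining points through a chord or a short path. The extended function satisfies $u\ge 0$ and $\Delta u = -g\,\chi_{D}$ in $B_R(x_0)$ in the distributional sense. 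The interface terms vanish precisely because both $u$ and $\partial_\nu u$ are zero there. This is the classical obstacle-type problem
$$\Delta u = f\,\chi_{\{u>0\}}\quad\text{(a.e.)},\qquad f:=-g^*\ge \alpha>0,\quad f\in C^1.$$
One caveat: $\{u>0\}$ may be strictly smaller than $D$. Since $\Delta u=f\ge\alpha>0$ a.e.\ on $D$, the set $\{u=0\}\cap D$ has measure zero, so the equation holds with $\chi_{\{u>0\}}$ a.e. Moreover $\partial\{u>0\}\cap B_R(x_0)=\partial D\cap B_R(x_0)$, because interior zeros of $u$ in $D$ are negligible in the relevant topological sense. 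The exterior of $D$ is open, and by the Lipschitz property it has an interior cone at each boundary point.

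The second step is to verify Caffarelli's nondegeneracy and regularity hypotheses and read off the conclusion. The free boundary points are then classified by blow-ups $u_r(x)=u(x_0+rx)/r^2$, which converge in $C^{1,\beta}_{\loc}$ to global solutions of $\Delta u_0=f(x_0)\chi_{\{u_0>0\}}$. These are either half-space solutions $\frac{f(x_0)}2 (x\cdot e)_+^2$ (regular points) or nonnegative quadratic polynomials (singular points, where the zero set has zero density). The Lipschitz character of $\partial D$ excludes singular points. The exterior cone condition gives the complement of $D$, hence $\{u=0\}$, positive density at every boundary point, uniformly at all scales. Caffarelli's theorem then says that near regular points, with positive density of the coincidence set, the free boundary is $C^1$ (in fact $C^{1,\beta}$) in some smaller ball $B_{R'}(x_0)$.

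The third step gives $w\in C^2(\overline D\cap B_{R'}(x_0))$. Once $\partial D$ is $C^{1}$, Caffarelli's argument also yields that the normalized second derivatives of the blow-ups converge uniformly; equivalently, $\partial_e u\ge0$ in a cone of directions and is $C^1$ up to the boundary. That gives continuity of $D^2u$ up to the free boundary from the positivity side, hence $w\in C^2$ there. Alternatively, once the boundary is $C^{1,\beta}$, boundary Schauder theory for $\Delta w=g$ with $w=0$ applies. Since $g\in C^1$, this gives $w\in C^{2,\beta}$ up to the boundary.

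The main obstacle is the second step: showing that every free boundary point near $x_0$ is regular with uniform constants, so that $C^1$ regularity holds on a full neighborhood and not just at $x_0$. I would handle it by exploiting the uniform exterior cone from the Lipschitz assumption. That cone yields a uniform lower bound on the density of $\{u=0\}$ in all small balls centred at nearby free boundary points, which is exactly the thickness condition in Caffarelli's Theorem 3.
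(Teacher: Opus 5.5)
Your proposal is correct and follows the paper's route. The paper gives no argument of its own; it invokes Caffarelli's regularity theorem for the obstacle problem, and your reduction is exactly the adaptation it has in mind: you extend $-w$ by zero to a nonnegative $C^{1,1}$ solution of $\Delta u=f\chi_{\{u>0\}}$ with $f=-g^*\ge\alpha$, and the Lipschitz exterior cone gives the coincidence set positive density.

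Two slips do not affect the conclusion. First, $\partial\{u>0\}$ may a priori contain isolated zeros of $u$ inside $D$, so it need not equal $\partial D$; such zeros are excluded near $x_0$ only a posteriori, once $x_0$ is known to be a regular point. Second, your Schauder alternative for the $C^2$ claim is wrong as stated, since Dirichlet data on a $C^{1,\beta}$ boundary only give $w\in C^{1,\beta}$. You must instead apply the estimate to $\partial_j w$, which vanish on $\partial D$ by the zero Cauchy data and satisfy $\Delta\partial_j w=\nabla\cdot(g\,e_j)$ with $g$ H\"older, or simply take the $C^2$ statement from Caffarelli's theorem as the paper does.
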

\noindent
In our case $g:=-k^2nw-k^2(n-1)v$ depends on the solution $w$. A major  obstacle to the application of Lemma \ref{th2} is to verify that the $H^2_0(D)$ solution $w$ to (\ref{nonsn2})-(\ref{nonsn3})  has all second derivatives  uniformly bounded in $D\cap B_R(x_0)$.  Our analysis is based on the study of a volume potential that expresses the solution $w$. We recall that   $n\in L^\infty(D) \cap C^{\alpha}(\overline D \cap B_R(x_0))$ for some ball $B_R(x_0)$ of radius $R$ centered at $x_0\in \partial D$ and any $0<\alpha\leq 1$.  We introduce the function 
$$
W(x) = \begin{cases} w(x) &\hbox{ for } x \in D~, \\ 
0 &\hbox{ for } x \in \mathbb{R}^d\setminus D.\end{cases}
$$ 
This function is in $H^2(\mathbb{R}^d)$ (since $w\in H^2_0(D)$) and for $m=2,3$ it follows from the Sobolev embedding theorem that $W \in C^{\alpha}(\mathbb{R}^d)$ for some $0<\alpha<1$.  Furthermore, $W$ solves
$$
\Delta W =\Psi \hbox{ in } \mathbb{R}^d, ~~ \hbox{ where }  \Psi(x) =\begin{cases} -\psi(x)&\hbox{ for } x \in D~ \\ 
0 &\hbox{ for } x \in \mathbb{R}^d\setminus D  \end{cases} 
$$
with $\psi(x)= k^2(n(x)-1)v(x)+k^2n(x)w(x),~x \in D$. The function $\psi$ is clearly in $L^\infty(D)$, and due to the assumptions about $n$ and $v$, and the $C^\alpha$ extendability of $w$, it has an extension that lies in $C^\alpha(B_r(x_0))$. If $\Upsilon(\cdot,\cdot)$ denotes the free space fundamental solution of the Laplace operator, we can express $W$ as a volume potential 
\begin{equation}\label{vol}
W(x)= W_\psi:=\int_D \psi(y) \Upsilon(x,y)\,dy
\end{equation} 
with $\psi= k^2(n-1)v+k^2nw \in L^\infty(D)\cap C^{\alpha}(B_r(x_0))$.   Standard regularity analysis  of the volume potential (\ref{vol}) with $\psi\in L^\infty(D)$, shows that 
$W_\psi \in C^1({\mathbb  R}^d)$ and its partial derivatives can be obtained by differentiation inside the integral. Due to the jump of $\Psi$ across $\partial D$, even for smooth $\psi$,  but with $\psi\neq 0$ on $\partial D$, the second derivatives of $W_\psi$ maybe become unbounded as $x$ approaches a boundary point from either inside or outside $D$.  Thus the volume potential is not necessarily in $C^2(\overline{D}\cap B_R(x_0))$ (for a general $\psi$). However, it is possible to show that the symmetric jumps of the second derivatives (the difference of the second derivatives taken at symmetric points on the normal from outside and inside $D$) are uniformly bounded near  $x_0\in \partial D$,   when $\psi \in C^{\alpha}(B_r(x_0))$ for some $0<\alpha<1$.  This result is proven in \cite[Lemma 4.2]{CV} following closely the proof of a similar result in \cite[Theorem 2]{pom3} for $\psi\equiv 1$.  Since $W=0$ outside $D$, this result  implies that all second derivatives of $W$ are uniformly bounded in $D \cap B_r(x_0)$ (for our special $\psi$). 

\noindent
Thus the above discussion implies  $w$ has an extension (by zero) in  $C^1(\mathbb{R}^d)$ and that $w$ is in $C^{1,1}(\overline{D}\cap B_r(x_0))$.  Lemma \ref{th2}  requires a real valued solution $w$. With this in mind, we note that the real valued function $w:=\Re(w)$ (we keep the same notation for the real part of $w$ for convenience) is a $H^2(D)$ solution to 
\begin{equation}\label{real1}
\Delta w+k^2nw=-k^2(n-1)\Re(v)~~\hbox{ with } w=\frac{\partial w}{\partial \nu}=0 \hbox{ on } \partial D~.
\end{equation}
and obviously the same regularity as above applies to the real part of $w$,  that is $w\in C^{1,1}(\overline D\cap B_r(x_0))$ and that $g= -k^2(nw+(n-1)\Re(v))$ has a $C^1$ extension to all of $\mathbb{R}^d$. Of course, one could also consider the imaginary part of the scattered field $w$, which satisfies the same equation as above with $\Re(v)$ replaced by $\Im(v)$. Accordingly, in what follows, everything holds true if we replace $\Re(v)$ by $\Im(v)$.  The essential and final missing step for application of Lemma \ref{th2} is to show that $w$ is of one sign. This is established in \cite[Proposition 5.2]{CV}, which we state below for the convenience of the reader.  Its technical proof follows almost verbatim the analysis by Williams in \cite[Section 5]{pom3} 
\begin{proposition}\label{prep2}
Assume that $\partial D$ is  Lipschitz,  $x_0\in \partial D$, and $n\in L^\infty(D)$. Furthermore, suppose $n$ lies in $C^{1,1}(\overline{D}\cap B_R(x_0))$, and  $(n(x_0)-1)\Re (v(x_0))\neq 0$.  Let  $w\in H^2(D)$ be a solution to (\ref{real1}). Then $w<0$ in $D\cap B_r(x_0)$ for some $0<r<R$ if $(n(x_0)-1)\Re (v(x_0))>0$, and $w>0$ in $D\cap B_r(x_0)$ for some $0<r<R$ if $(n(x_0)-1)\Re (v(x_0))<0$.
\end{proposition}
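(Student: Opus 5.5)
The plan is to reduce the sign statement to a local comparison argument near $x_0$, in the spirit of Williams \cite[Section 5]{pom3}. By the discussion preceding the statement (the volume potential (\ref{vol}) and \cite[Lemma 4.2]{CV}), the real solution $w$ of (\ref{real1}) lies in $C^{1,1}(\overline D\cap B_R(x_0))$, and its extension $W$ by zero is in $C^{1,1}$ near $x_0$. I will treat the case $(n(x_0)-1)\Re(v(x_0))>0$; the other case follows by replacing $w$ with $-w$.

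First I rewrite the equation as $\Delta w=g$ with $g:=-k^2(n-1)\Re(v)-k^2nw$. Since $w$ has zero Cauchy data on the Lipschitz boundary, $w(x_0)=0$. Hence $g(x_0)=-k^2(n(x_0)-1)\Re v(x_0)<0$, and by continuity $g\le-\alpha<0$ in $\overline D\cap B_{r_0}(x_0)$. Second, from $W\in C^{1,1}$ and $W=\nabla W=0$ on $\partial D$, I get the quadratic bounds
\[
|w(x)|\le C\,\mathrm{dist}(x,\partial D)^2,\qquad |\nabla w(x)|\le C\,\mathrm{dist}(x,\partial D)
\]
in $D\cap B_{r_0}(x_0)$.

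The core step is to show $w<0$ near $x_0$. I argue by contradiction, assuming points $x_j\to x_0$ in $D$ with $w(x_j)\ge0$. Let $\rho_j=\mathrm{dist}(x_j,\partial D)$ and rescale
\[
w_j(y)=\rho_j^{-2}\,w(z_j+\rho_j y),
\]
where $z_j\in\partial D$ is a nearest point to $x_j$. The rescaled domains $D_j$ are uniformly Lipschitz, so after passing to a subsequence they converge to a Lipschitz graph domain $D_\infty$. By the quadratic bounds, the $w_j$ (extended by zero) are uniformly $C^{1,1}$ on compacts. Hence they converge in $C^{1,\beta}_{\mathrm{loc}}$ to some $w_\infty$ with the following properties:
\begin{itemize}
\item $\Delta w_\infty=g(x_0)<0$ in $D_\infty$;
\item $w_\infty=\nabla w_\infty=0$ on $\partial D_\infty$;
\item $|w_\infty(y)|\le C\,\mathrm{dist}(y,\partial D_\infty)^2$;
\item $w_\infty(e)\ge0$ at a point $e$ with $\mathrm{dist}(e,\partial D_\infty)=1$.
\end{itemize}
Near $e$ the convergence is uniform in $C^2$ by interior Schauder estimates, which is why the sign condition at $e$ survives in the limit.

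The main obstacle is the resulting Liouville-type classification: a global solution of this overdetermined problem in a Lipschitz graph domain, with quadratic growth, must be negative. To prove it, I would first use the Lipschitz-graph geometry to place $D_\infty$ inside a two-sided family of cones at each boundary point. I would then compare $w_\infty$ on truncated cones/slabs with explicit negative barriers of the form $\tfrac{g(x_0)}{2}\big((y-z)\cdot\nu\big)^2$ plus harmonic corrections. The quadratic growth bound controls the lateral boundary terms, so that after letting the truncation go to infinity the maximum principle gives $w_\infty\le\tfrac{g(x_0)}{2}\,c\,\mathrm{dist}^2<0$ at $e$, a contradiction.

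If that barrier argument proves too crude, the fallback is Williams' more careful analysis. It shows that a blow-up at a Lipschitz point must be a half-space solution $\tfrac{g(x_0)}{2}(y_d)_+^2$, using the monotonicity/rigidity of global $C^{1,1}$ solutions of $\Delta W=g(x_0)\chi_{D_\infty}$ in the spirit of Caffarelli's classification. That solution is strictly negative inside, which gives the same contradiction. Either route yields $w<0$ in $D\cap B_r(x_0)$ for some $0<r<R$.
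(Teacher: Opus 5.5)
Your preliminary steps are fine: $g(x_0)<0$, the bounds $|w|\le C\,\mathrm{dist}^2$ and $|\nabla w|\le C\,\mathrm{dist}$, and the compactness argument that produces a global $C^{1,1}$ solution $w_\infty$ on a Lipschitz graph domain $D_\infty$ with $w_\infty(e)\ge 0$. After that, however, the whole content of the proposition is the Liouville statement. You do not prove it, and the barrier route you sketch cannot work. A comparison argument on a truncated domain sees only two things: $w_\infty=0$ on $\partial D_\infty$, and the size of $w_\infty$ on the truncation boundary. At scale $\rho$ the latter is $O(\rho^2)$, even with $|w_\infty|\le C\,\mathrm{dist}^2$. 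That information does not determine the sign. In the half-space $\{y_d>0\}$, with $a=-g(x_0)$, the function $-\tfrac a2 y_d^2+\beta y_1y_d$ has Laplacian $-a$, zero Dirichlet data and quadratic growth, yet it is positive where $\beta y_1>\tfrac a2 y_d$. What rules it out is the vanishing normal derivative, which a maximum principle does not see. Concretely, your barrier $\tfrac{g(x_0)}{2}((y-z)\cdot\nu)^2$ is strictly negative on $\partial D_\infty$ away from the hyperplane through $z$ orthogonal to $\nu$ (and a Lipschitz graph has no normal at $z$ in general). So $w_\infty=0$ lies \emph{above} the barrier on the boundary, and the comparison goes the wrong way. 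The harmonic correction must then have positive, quadratically growing boundary data, and its value at $e$ is uncontrolled. Nor do the lateral terms vanish. Already for a half-space, the harmonic measure at $e$ of $\partial B_\rho\cap D_\infty$ is of order $\rho^{-1}$, and for Lipschitz graphs wider than a half-space it decays more slowly. Multiplying by $\rho^2$ therefore does not give something tending to zero.

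The fallback is circular as stated. Caffarelli's classification of global solutions is for the obstacle problem with the sign $W\le 0$ already known, and that sign is exactly the claim. Without the sign, $\Delta W_\infty=g(x_0)\chi_{D_\infty}$ with $W_\infty=0$ off $D_\infty$ is the no-sign obstacle problem. That its global solutions with thick complement are half-space solutions is a substantial theorem of Caffarelli, Karp and Shahgholian, resting on the Alt--Caffarelli--Friedman monotonicity formula; you would have to invoke it explicitly, and it postdates Williams, so it is not his argument. Notice also that you never use $n\in C^{1,1}$. That hypothesis is what a direct argument of the kind the paper cites requires, and it avoids any classification:
\begin{enumerate}
\item Since $g=-k^2(nw+(n-1)\Re v)$ is then $C^{1,1}$ near $x_0$, each $\partial_{ij}w$ is a bounded solution of $\Delta(\partial_{ij}w)=\partial_{ij}g\in L^\infty$ in $D\cap B_R(x_0)$. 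By Hunt--Wheeden and Dahlberg it has nontangential limits a.e.\ on $\partial D$.
\item Because $w=\nabla w=0$ on $\partial D$, these limits equal $g\,\nu_i\nu_j$ at a.e.\ point where $\partial D$ is differentiable.
\item Write $\partial D$ locally as $x_d=\phi(x')$ with Lipschitz constant $L$. The boundary values of $\partial_{dd}w$ are then $g\,\nu_d^2\le-\alpha/(1+L^2)$ a.e. A harmonic-measure estimate gives $\partial_{dd}w\le -c<0$ in $D\cap B_r(x_0)$ for small $r$.
\item Integrate twice along vertical segments starting at $(x',\phi(x'))$, where $w=\partial_d w=0$. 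This yields $w(x)\le -\tfrac c2\,(x_d-\phi(x'))^2<0$.
\end{enumerate}
This uses both Cauchy conditions directly, which is precisely what your barrier step lacks.
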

\noindent
With all the ingredients in place, we apply Lemma \ref{th2}, which under the assumptions of Theorem \ref{scatnon1} shows that if non-scattering occurs then $\partial D$ is locally $C^1$ and $w$ is locally $C^2$. This, in turn, enables the use of the Kinderlehrer-Nirenberg regularity theory to establish the regularity of non-scattering inhomogeneities expressed by  parts $1.$ and $2.$  of Theorem \ref{scatnon1}. We note that it is possible to start with weaker regularity assumptions on $\partial D$ than Lipschitz and still establish similar regularity results for non-scattering inhomogeneities. For details on such a generalization we refer the reader to \cite{fb7}.
\noindent
\begin{remark}\label{rem2}
{\em In our  non-scattering  application $v$ is real analytic in ${\mathcal O}\supseteq \overline D$  by virtue of being solution of the Helmholtz equation. However, it is clear from our analysis that the statements of Theorem \ref{scatnon1} is valid if $v$ is only defined on one side of $\partial D$, and  the regularity of $v$ up to the boundary matches that of $n$; simply notice that  our arguments rely only on the local regularity of the source term $(1-n)v$ in ${\overline D}\cap B_R(x_0)$.}
\end{remark}
\noindent
For $n$ sufficiently smooth and $\partial D$ Lipschitz, we already know that  (\ref{nonsn2})-(\ref{nonsn3}) has non-zero solution at a real transmission 
eigenvalue. In view of Remark \ref{rem2}, Theorem \ref{scatnon1}  provides a statement on up-to-the-boundary regularity (or the lack thereof)  of the $v$-part of the transmission eigenfunction. Indeed, we obtain the following result on the (ir)regularity as a simple consequence of  Theorem \ref{scatnon1}  
\noindent
\begin{corollary}\label{cor1}
Assume $k>0$ is a real transmission eigenvalue with eigenfunction $(u, v)$,  $\partial D$ is Lipshitz, $n\in L^\infty(D)$,  and there exits $x_0\in \partial D$ such that $n(x_0)-1\neq 0$. Then:
\begin{enumerate}
\item  If $n$ is real analytic in a neighborhood of $x_0$  and $\partial D\cap B_r(x_0)$ is not real analytic for any ball $B_r(x_0)$, then $v$ can not be real analytic  in any neighborhood of $x_0$, unless $v(x_0)=0$.
\item If $n\in C^{m, \mu}(\overline{D}\cap B_R(x_0))\cap C^{1,1}(\overline{D}\cap B_R(x_0))$ for $m \geq 1$, $0<\mu<1$ and some ball $B_R(x_0)$, and  $\partial D\cap B_r(x_0)$  is not of class $C^{m+1, \mu}$ for any ball $B_r(x_0)$, then $v$ cannot lie  in $C^{m, \mu}(\overline{D}\cap B_r(x_0))\cap C^{1,1}(\overline{D}\cap B_r(x_0))$ for any ball $B_r(x_0)$, unless $v(x_0)=0$.
\end{enumerate}
\end{corollary}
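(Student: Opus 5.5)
We argue by contradiction, using Theorem \ref{scatnon1} in its local form (the Remark after it and Remark \ref{rem2}). Suppose $k>0$ is a real transmission eigenvalue with eigenfunction $(u,v)$. Set $w:=u-v\in H^2_0(D)$. Then $w$ solves (\ref{nonsn2})--(\ref{nonsn3}) with this $v$, and $v$ solves the Helmholtz equation in $D$. We assume $v(x_0)\neq 0$ and that $v$ has the regularity excluded in item 1 or item 2. The goal is to deduce that $\partial D\cap B_r(x_0)$ is real analytic, respectively of class $C^{m+1,\mu}$, for some small $r$, which contradicts the hypothesis.

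\textbf{Step 1 (checking the hypotheses of the local theorem).} For item 1, $v$ is real analytic in a neighbourhood $U$ of $x_0$. In particular $v$ is defined and analytic on $\overline D\cap B_R(x_0)$ for small $R$, and it is continuous at $x_0$, so $v(x_0)$ is meaningful. By assumption $n$ is real analytic near $x_0$, and $n(x_0)-1\neq0$, so $(n(x_0)-1)v(x_0)\neq0$.

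For item 2, $v\in C^{m,\mu}\cap C^{1,1}(\overline D\cap B_r(x_0))$, which is the same regularity assumed for $n$. Hence the source $k^2(1-n)v$ belongs to $C^{m,\mu}\cap C^{1,1}(\overline D\cap B_r(x_0))$, and $(n(x_0)-1)v(x_0)\ne0$.

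Remark \ref{rem2} says the proof of Theorem \ref{scatnon1} uses only the local regularity of $(1-n)v$ in $\overline D\cap B_R(x_0)$. The preceding remark says only the local smoothness of $n$ near $x_0$ is needed, even though the theorem is stated on $D_\delta$. So the local version of Theorem \ref{scatnon1} applies to $w$ on $D\cap B_r(x_0)$.

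\textbf{Step 2 (the real/imaginary reduction).} Theorem \ref{scatnon1} passes through Proposition \ref{prep2}, which requires $(n(x_0)-1)\Re v(x_0)\neq0$. Since $v(x_0)\neq0$, at least one of $\Re v(x_0)$ and $\Im v(x_0)$ is nonzero; we may also multiply $(u,v)$ by a unimodular constant first. Choose the nonzero part. That part of $w$ solves (\ref{real1}), or its analogue with $\Im v$, and it has the same regularity. Here we need $n$ real-valued, which is implicit because the eigenvalue is real; compare Proposition \ref{theow0}.

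\textbf{Step 3 (applying the theorem).} The chain is as follows. The volume potential argument gives $w\in C^{1,1}$ near $x_0$. Proposition \ref{prep2} gives that $w$ has one sign near $x_0$. Lemma \ref{th2} then gives that $\partial D$ is $C^1$ near $x_0$ and $w$ is $C^2$ there. Finally, Kinderlehrer--Nirenberg yields that $\partial D\cap B_{r'}(x_0)$ is real analytic (item 1) or $C^{m+1,\mu}$ (item 2). Either conclusion contradicts the hypothesis on $\partial D$, so $v$ cannot have the stated regularity unless $v(x_0)=0$.

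\textbf{Main obstacle.} Nothing new has to be proved here. The delicate point is making sure the local version of Theorem \ref{scatnon1} really does require only one-sided regularity of $v$ on $\overline D\cap B_r(x_0)$. Concretely, the source $\psi=k^2(n-1)v+k^2nw$ must admit a $C^\alpha$ extension across $\partial D$, which is needed for the jump estimate of \cite[Lemma 4.2]{CV}, and $g$ must admit a $C^1$ extension, which is needed for Lemma \ref{th2}. These extensions follow from the $C^{1,1}$ (in particular Lipschitz) regularity of $n$ and $v$ up to the boundary, using a standard extension of Lipschitz or $C^{1,1}$ functions from a Lipschitz domain. This is the step I would verify carefully.
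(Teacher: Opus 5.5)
Your proposal is correct and is essentially the paper's argument. The paper also treats the corollary as a direct consequence of Theorem \ref{scatnon1} in its local, one-sided form (Remark \ref{rem2} and the remark after the theorem), applied to $w=u-v\in H^2_0(D)$ solving (\ref{nonsn2})--(\ref{nonsn3}), just as you do by contradiction. One small correction: the realness of $n$ used in your real/imaginary-part reduction comes from the standing assumption $n>0$, not from $k$ being real (Proposition \ref{theow0} does not give it), though this does not affect the argument.
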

\noindent

\subsubsection{Non-vanishing condition and inhomogeneities with corner singularities} 
The non-vanishing condition $(n(x_0)-1)v(x_0)\neq 0$ at the boundary point $x_0$ is essential for all free boundary regularity techniques. The condition on the contrast $n-1$ is very natural, and such an assumption is already required in the study of the transmission eigenvalue problem. Several physical incident waves such as 
$$v(x):=e^{ikx\cdot \eta} \qquad \mbox{a plane wave with incident direction $\eta$, \qquad \qquad or }$$
$$v(x):=\Phi(x;z) \qquad \mbox{a point source located at  $z\in {\mathbb R}^d\setminus{\overline{\mathcal O}}$},$$
where $\Phi(\cdot ; \cdot)$ is the free space fundamental solution of the Helmholtz equation always satisfy the non-vanishing condition $v(x_0)\neq 0$. However, for many other incident waves, such as Herglotz wave functions given by (\ref{herg}), this non-vanishing condition is not always satisfied. 

\noindent
When the support $D$ of the inhomogeneity contains a circular conical point, a vertex, or an edge, the set of non-scattering wave numbers is empty, provided only the non-vanishing condition on $n-1$ is satisfied. This result was first proven by  Bl{\aa}sten, P\"aiv\"arinta  and Sylvester in \cite{nonscat} for a  rectangular corner  in ${\mathbb R}^d$, $d\geq 2$,  and later extended in \cite{PSV17} for a convex corner, using the so-called complex geometric optics (CGO) solutions for the Helmholtz equation. A further development, based on new types of CGO solutions introduced in \cite{xiao}, removes the convexity assumption for corners in ${\mathbb R}^2$. This approach begins with the transmission eigenvalue problem for a real transmission eigenvalue and shows that the component  $v$ of the eigenfunctions cannot be real analytic in a neighborhood of the vertex. In particular, if $x_0\in \partial D$ it is straightforward to show that for transmission eigenfunctions  $(w,v)$ satisfying (\ref{ten}), and any solution to
$$\Delta \psi +k^2n\psi=0 \qquad \mbox{in $B_\epsilon(x_0)\cap D$,}$$
we have that 
$$\int_{\partial B_\epsilon(x_0)\cap D}\left(\frac{\partial w}{\partial \nu}\psi-\frac{\partial \psi}{\partial \nu}w \right)\, ds=\int_{B_\epsilon(x_0)\cap D}k^2(1-n)v\psi\, dx,$$
where $\partial B_\epsilon(x_0)\cap D$ is the part of the sphere inside $D$. Choosing test functions $\psi$ from a family of CGO  solutions that decay rapidly away from $x_0$, and combining this with asymptotic analysis, one arrives at a contradiction if $v$ is assumed to be analytic in $B_\epsilon(x_0)$ and $x_0$ is a boundary singular point.

\noindent
The most comprehensive analysis of geometries with boundary singularities is due to Elschner and Hu  \cite{ElH18},  who employed a refined singularity analysis of solutions to (\ref{nonsn2})-(\ref{nonsn3}) in a neighborhood of the boundary singular points. In their work, the non-vanishing assumption on $n-1$ at the boundary singularity is also relaxed.  In the following we state precisely their result  and refer the reader to \cite{ElH18} for the proof.  
\begin{theorem} \label{non-scati} Let $(D,n_D)$  be an inhomogeneity with positive refractive index $n_D\in L^\infty(D)$, and let $n$ denote the function which equals $n_D$ in $D$ and the constant $1$ in $\mathbb{R}^d\setminus D$, $d=2,3$. Assume that there exists a boundary point $x_0\in \partial D$ that is a planar corner in ${\mathbb R}^2$, or an edge or a circular conic point  in ${\mathbb R}^3$(see \cite[Definitions 2, 3]{ElH18}  for the precise definition of these boundary singularities). Furthermore, assume that there exists $m\in {\mathbb N}_0$, $\mu\in (0,1)$  and some  $R>0$ such that  $n\in C^{m, \mu}(\overline{D\cap B_R(x_0)})\cap W^{m,\infty}(B_R(x_0))$ and  $\nabla^m(n-1)(x_0)\neq 0$. Then this inhomogeneity scatters every incident wave non-trivially.
\end{theorem}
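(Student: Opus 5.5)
We argue by contradiction. Suppose some nontrivial incident wave $v$, satisfying (\ref{nonsn1}) in a neighborhood $\mathcal{O}$ of $\overline D$, is not scattered. Then there is $w\in H^2_0(D)$ solving (\ref{nonsn2})--(\ref{nonsn3}). Localize to $B_R(x_0)$, where $v$ is real analytic. Extending $w$ by zero gives $W\in H^2(B_R(x_0))$ with
\[
\Delta W+k^2 nW=k^2(1-n)\chi_D v \quad \mbox{in } B_R(x_0),
\]
where $n$ is now the globally defined index ($n=1$ outside $D$). Since $n-1=0$ outside $D$, the right-hand side is simply $k^2(1-n)v$ on all of $B_R(x_0)$. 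The hypothesis $n\in C^{m,\mu}(\overline{D\cap B_R})\cap W^{m,\infty}(B_R)$ means the source $f:=k^2(1-n)v$ is piecewise $C^{m,\mu}$ and globally $W^{m,\infty}$. Consequently all derivatives of $n-1$ of order $<m$ vanish at $x_0$ from inside, and $\nabla^m(n-1)(x_0)$ is the first nonzero jump across the singular boundary. The task is then to show that $W$, which vanishes identically outside the cone, edge or corner, is incompatible with this source.

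First, I reduce to the case $v(x_0)\neq0$, or more generally to the lowest-order nonvanishing homogeneous term of $v$. Since $v$ is analytic and nontrivial, expand $v=\sum_{j\ge j_0}P_j$ with $P_{j_0}\neq0$, a homogeneous harmonic polynomial; the Helmholtz corrections are of higher order. Likewise expand $n-1$ near $x_0$ inside $D$ as its Taylor polynomial $Q_m(x-x_0)+O(|x-x_0|^{m+\mu})$, with $Q_m\neq0$ homogeneous of degree $m$ (extended by zero outside the sector/cone). The source then has leading term $-k^2 Q_m P_{j_0}\chi_{\mathcal C}$, homogeneous of degree $m+j_0$ and supported in the tangent cone $\mathcal C$ at $x_0$. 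Since $Q_m\neq0$ and $P_{j_0}\neq0$ are polynomials, their product does not vanish on the open cone.

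Second, I perform a singularity (Kondrat'ev-type) expansion of $W$ near $x_0$ in polar or spherical coordinates adapted to the corner, edge or cone. Solutions of $\Delta W=F$ with $F$ piecewise polynomial on $\mathcal C$ decompose into homogeneous particular solutions plus homogeneous harmonic terms; the lower-order term $k^2nW$ only enters at higher orders. Because $W\equiv0$ on the complement of $\mathcal C$ and $W\in H^2$, matching the leading homogeneous order $m+j_0+2$ gives a homogeneous function $W_0$ with the following properties: $\Delta W_0=-k^2Q_mP_{j_0}$ on $\mathcal C$, $W_0=\partial_\nu W_0=0$ on $\partial\mathcal C$, and $W_0=0$ outside. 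Equivalently, on $\mathcal C$ the distributional Laplacian of $\chi_{\mathcal C}W_0$ equals $-k^2\chi_{\mathcal C}Q_mP_{j_0}$ with no boundary layers.

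Third, and this is the main obstacle, I show that this homogeneous overdetermined problem has no solution when the cone is nonsmooth (an angle $\neq\pi$ in 2D, a genuine edge, or a circular cone that is not a half-space). In 2D I would write $W_0=r^{N}\Theta(\theta)$ with $N=m+j_0+2$, turning the problem into an ODE $\Theta''+N^2\Theta=$ trigonometric polynomial on $(0,\alpha)$ with $\Theta=\Theta'=0$ at both ends. Solvability then forces orthogonality-type conditions, $\int_0^\alpha g(\theta)\sin(N\theta)\,d\theta=\int_0^\alpha g(\theta)\cos(N\theta)\,d\theta=0$ for the angular source $g$. An alternative is to test against the harmonic functions $(x_1\pm ix_2)^N$ via Green's identity, giving $\int_{\mathcal C} Q_mP_{j_0}\,\bar z^{\,N}\ldots=0$. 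I would aim to show these moments, computed explicitly for homogeneous polynomial data, vanish simultaneously only if $\alpha\in\{\pi\}$ or the product $Q_mP_{j_0}$ is zero; this last alternative is excluded. For the circular cone in 3D the same test with harmonic polynomials $(x\cdot\zeta)^N$, $\zeta\cdot\zeta=0$, reduces to Legendre-type integrals on the cap. The edge reduces to the 2D corner by freezing the edge variable. Establishing nonvanishing of these moments uniformly over all admissible polynomial pairs is the delicate step, and I would follow the refined analysis of \cite{ElH18} there. The resulting contradiction shows that $w$ cannot exist, so every incident wave is scattered.
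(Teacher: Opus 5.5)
Your route (extend $w$ by zero, take the lowest-order homogeneous part of $W$ at $x_0$, and get a contradiction from an overdetermined homogeneous problem on the tangent cone) is the kind of singularity analysis the paper attributes to \cite{ElH18}. The paper itself gives no argument beyond that citation. The gap is in your third step, which you both defer and misformulate.

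Nonvanishing of the moments $\int_0^\alpha g(\theta)e^{\pm \im N\theta}\,d\theta$ does \emph{not} follow from $Q_m\neq0$, $P_{j_0}\neq0$ and $\alpha\neq\pi$. On the quadrant, $W_0=x_1^2x_2^2$ has zero Cauchy data on both edges, yet $\Delta W_0=2|x|^2\neq0$; equivalently $\int_0^{\pi/2}e^{\pm4\im\theta}\,d\theta=0$. More generally, the solvability conditions are at most two linear constraints on the $(N-1)$-dimensional space of homogeneous sources of degree $N-2$. So for $N\ge4$, nonzero sources always pass them, and the contradiction must come from the specific form of the leading source, which your proposal never uses. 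In particular, if $Q_m$ were only required to vanish to order $m$ at $x_0$ (say $Q_2$ a multiple of $|x-x_0|^2$, $v(x_0)\neq0$, right-angle corner), the leading-order problem is solvable and your argument yields no contradiction.

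The needed structure is available from two facts.
\begin{itemize}
\item $P_{j_0}$ is harmonic.
\item As the theorem is written ($n=1$ off $D$ and $n\in W^{m,\infty}(B_R(x_0))$), each $\partial^\beta(n-1)$ with $|\beta|\le m-1$ is continuous and vanishes on $\partial D$ near $x_0$. For $|\beta|=m-1$, differentiate along a face and let the point tend to $x_0$, using $n\in C^{m,\mu}(\overline{D\cap B_R(x_0)})$. This gives $\omega\cdot\nabla\partial^\beta(n-1)(x_0)=0$ for every face direction $\omega$. For a non-flat corner, edge or circular cone these directions span $\R^d$, so $\nabla^m(n-1)(x_0)=0$ whenever $m\ge1$.
\end{itemize}
Hence only $m=0$ has content; $\nabla^m(n-1)(x_0)$ is never a genuine $m$-th order jump at such a point. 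The leading source is then the nonzero constant $-k^2(n(x_0)-1)$ times $P_{j_0}$.

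With this, the 2D case closes. Write $P_{j_0}=r^{N-2}(a\cos(N-2)\theta+b\sin(N-2)\theta)$. Your two moment conditions form a $2\times2$ system in $(a,b)$ with determinant $\tfrac14\bigl(\sin^2\alpha-\sin^2((N-1)\alpha)/(N-1)^2\bigr)$. This is nonzero for $\alpha\neq\pi$, because $|\sin K\alpha|<K|\sin\alpha|$ for $K\ge2$ and $\sin\alpha\neq0$. For $N=2$ the moment is $(e^{\pm2\im\alpha}-1)/(\pm2\im)\neq0$. This settles the planar corner, modulo a rigorous justification of the expansion; zero Cauchy data also excludes $r^N\log r$ terms.

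For the edge, do the reduction honestly rather than by freezing the edge variable. $\partial_3^{N-1}W_0$ is harmonic with zero Cauchy data on the faces, hence vanishes, so $W_0$ is polynomial in $x_3$. Comparing top powers of $x_3$ reduces to the 2D case, since the top $x_3$-coefficient of a harmonic polynomial is harmonic in $(x_1,x_2)$.

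The circular cone is where your proposal is missing a real ingredient. Write the cone as $\{\vartheta<\alpha\}$, with the flat case $\alpha=\pi/2$. Testing, azimuthal mode by mode, against the harmonic functions $r^NP^k_N(\cos\vartheta)e^{-\im k\varphi}$ reduces everything to showing $\int_{\cos\alpha}^1P^k_{N-2}(t)P^k_N(t)\,dt\neq0$ for $\alpha\neq\pi/2$. That is a special-function estimate you have not supplied, and it is where the actual work for cones lies.
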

\noindent

\subsection{Remarks on analytic inhomogeneities}\label{anal}
Theorem \ref{scatnon1}  asserts that, if the index of refraction 
$n_D$ is real analytic, then any non-degenerate incident wave (one for which $(n_D-1)v(x_0)\neq 1$) will scatter  from any inhomogeneity, whose boundary is not real analytic. This raises the natural question: what happens for real analytic inhomogeneities? This question has been analyzed  in some detail for $n_D=\hbox{constant}\neq 1$ in two dimensions. The analysis is found in \cite{HV,HV2}, and the techniques used there are of a variational nature and generalizations of techniques, that have already been successfully used to investigate whether a domain has the Schiffer or Pompeiu property (see \cite{BrKaTa,GaSe}). As a first example example consider incident plane waves 
$e^{ik x\cdot \eta}$, $\eta \in \mathcal{S}^1$. We introduce the notion of the width  of an inhomogeneity $D$ in the direction $\lambda \in \mathcal{S}^1$
$$
w_D(\lambda)= \sup_{x\in D} \lambda\cdot x - \inf_{x\in D} \lambda \cdot x~,
$$
and the associate notions of maximal width and minimal width
$$
W^*(D)=\max_{\lambda \in \mathcal{S}^1} w_D(\lambda) \quad  \hbox{ and } \quad w_*(D)=\min_{\lambda \in \mathcal{S}^1} w_D(\lambda) 
$$
It is among other results proven that
\begin{enumerate}
\item Any inhomogeneity $D$  with constant index of refraction  $\eta_D<1$ will scatter any incident plane wave.
\item Any strictly convex inhomogeneity $D$ for which $W^*(D)\ge 2 w_*(D)$  will scatter any plane wave.
\end{enumerate}
Since $(n_D-1)e^{ik x_0\cdot \eta}$ is always different from zero, the novelty of these results is only for real analytic inhomogeneities. We refer to \cite{HV} for details and other results about the scattering of plane waves. In \cite{HV2}  the reader can find the analysis for more general incident waves and a wide class of real analytic and piecewise real analytic inhomogeneities including (all non circular) ellipses, certain non convex domains, and domains with inward cusps. For any inhomogeneity $D$ in this class of domains  the authors  established the existence of a point $z_0 \in \mathbb{C}^2$ with the property that any incident wave extendable as an analytic function to a $\mathbb{C}^2$ neighborhood of $\R^2$, containing $z_0$, and non-vanishing at $z_0$, will be scattered by $D$. In particular any incident plane wave will also scatter from such an inhomogeneity. \footnote{Inhomogeneities with inward cusps are of interest, since this kind of non-Lipschitz singularity was not excluded by the regularity result in \cite{fb2}. For our example the point $z_0$ coincides with the cusp.} Finally we also want to call attention to \cite{vx} and \cite{vx2} where it is shown that any Herglotz wave with a fixed density $\varphi$ is at most non-scattering at {\it finitely } many wave numbers for inhomogeneities in the shape of quite general non spherically symmetric perturbations of a disk.

\section{The Case of $A\not\equiv I$.}
\label{AneqI}

Let $D$ denote a bounded, simply connected inhomogeneity. As before
$$
A= \begin{cases} A_D \hbox{ in } D \\
I  \hbox{ in } \R^d \setminus D \end{cases} \hbox{ and } n= \begin{cases} n_D \hbox{ in } D \\
1  \hbox{ in } \R^d \setminus D \end{cases}
$$
Here, $A_D$ and $n_D$, the electromagnetic constitutive parameters of satisfy \eqref{eq:Aellip} and $n_D >0$, respectively. By an incident wave, $u^{\In}$, at wavenumber $k$, we shall understand a solution to $(\Delta+k^2)u^{\In}=0$.  We recall (cf. (\ref{MainGov1})) that when there is contrast in both electromagnetic properties then non-scattering for the incident wave occurs exactly when there exists a solution to 
\begin{equation}
\label{nonscat}
\nabla \cdot(A \nabla u) +k^2 nu=\nabla \cdot((I-A)\nabla u^{\In})+k^2(1-n)u^{\In} ~~\hbox{ in } ~~\R^d,
\end{equation}
with $u=0$ in $\R^d \setminus D$. When $\partial D$ is of class $C^{1,\alpha}$, this is equivalent to the existence of a solution to
\begin{equation}
\label{nonscat2}
\nabla \cdot(A_D \nabla u) +k^2 n_D u=\nabla \cdot((I-A_D)\nabla u^{\In})+k^2(1-n_D)u^{\In} ~~\hbox{ in } ~~D,
\end{equation}
with 
\begin{equation}
\label{nonscatbc}
u=0 \hbox{ and } \nu \cdot A_D \nabla u = \nu \cdot (I-A_D)\nabla u^{\In} \hbox{ on }\partial D. 
\end{equation}
As pointed out earlier a necessary condition for the existence of an incident wave at wave number $k$, which is non-scattering. is that $k$ be a transmission eigenvalue. We refer to (\ref{te}) or (\ref{te2})  for the formulation of the transmission eigenvalue problem in the case of contrast in $A$ (as well as $n$). In brief: under very general conditions there exists a countable infinite set of real transmission eigenvalues, even when the boundary is only assumed to be Lipschitz. More precisely for  anisotropic media with contrast only in $A\in L^\infty({\mathbb R}^d)$, i.e. $n\equiv1$,  the existence of infinitely many real transmission eigenvalues accumulating only at $+\infty$ is proven in \cite{1}, provided that the matrix $A_D-I$ is either positive definite or negative definite uniformly in $D$. In this case it is possible to use a fourth order approach similar to the one discussed in Section \ref{TEn}.  The case  with contrast in both $A$ and $n$ in $L^\infty({\mathbb R}^d)$, is discussed in \cite{CK}, where it is shown that an infinite set of real transmission eigenvalues exists provided that $A_D-I$ and $n_D-1$ are of one sign (the same or opposite) uniformly in $D$. The approach in this case differs from the one discussed in  Section \ref{TEn}, since it is not possible to write the transmission eigenvalue problem in terms of  a fourth order differential operator. Under the above assumptions it is also known that all transmission eigenvalues (real and complex) form a discrete set with only infinity as accumulation point. The state-of the-art of the spectral analysis for the transmission eigenvalue problem (\ref{te2}), including discreteness of real and complex eigenvalues, completeness of generalized eigenfunctions in $(L^2(D))^2$, and Weyl asymptotics for the eigenvalue counting function, can be found in  \cite{nguyen2, nguyen}. This spectral analysis is conditional on some 
``ellipticity''  assumptions on the coefficients at  the boundary $\partial D$. More specifically, it requires that $\partial D$ is of class $C^2$, and that $A_D$ and $n_D$ are continuous on $\overline D$ and satisfy  for all $x\in \partial D$
\begin{equation}\label{dis}
(A_D(x)\nu \cdot \nu)(A_D(x)\tau \cdot \tau)-(A_D(x)\nu \cdot \tau)^2\neq
 1 \qquad \mbox{and} \qquad (A_D(x)\nu \cdot  \nu)n_D(x)\neq 1
 \end{equation}
 for all unit vectors $\tau\in {\mathbb R}^d$  perpendicular to the unit normal vector $\nu$ (the first condition is equivalent to the complementing condition, due to Agmon, Douglis and Nirenberg \cite{ADN}). We refer the reader to \cite{vodev} for results about the location  of transmission eigenvalues in the complex plane. For a comprehensive discussion we also refer the reader to \cite[Chaper 4]{CakoniColtonHaddar2016}.

\medskip
\noindent
When it comes to wave numbers which possess non-scattering incident waves the situation is different; just as in the case when there is no contrast in the principal term, the existence of such a wave number implies some regularity on the boundary of the inhomogeneity, $\partial D$. Analogous to the case $A\equiv I$ we note that, if $D:=B_R$ is a ball of radius $R$ centered at the origin, and $A_D:=a_D(r)I$, $n_D:=n_D(r)$ with scalar functions  depending only on the radial variable $r$, satisfying
$$\frac{1}{R}\int_0^R\left(\frac{n_D(r)}{a_D(r)}\right)^{1/2} \,dr \neq 1~,$$
then it is possible to demonstrate, by separation of variables, the existence of infinitely many non-scattering wave numbers \cite{CK, coltonkress}.  In fact, for this spherically stratified medium the set of real transmission eigenvalues and the set of non-scattering wave numbers coincide. Furthermore, the non-scattering incident waves are superpositions of plane waves, otherwise known as Herglotz wave functions with particular densities, and each density is associated with an infinite set of non-scattering wave numbers.

\noindent
The following result was proven in \cite{CVX}; it ensures regularity of $\partial D$ near any point $x_0$ on $\partial D$ where the following non-degeneracy condition is satisfied
\begin{equation}
\label{nondeg}
\nu \cdot (A_D-I)\nabla u^{\In}(x_0)\neq 0
\end{equation}
\begin{theorem}
\label{anisoreg}
Let $D$ be a bounded domain in $\R^d$, $d\ge2$, of class $C^{1,\mu}$. Suppose for some integer $m\ge 1$ that $n_D\in C^{m,\mu}(\overline{D})$  and that $A_D\in \left(C^{m+1,\mu}(\overline{D})\right)^{d\times d}$ with the condition \eqref{eq:Aellip} satisfied. 
Suppose there exists a solution $u$ to (\ref{nonscat}) with $u=0$ in $\R^d \setminus D$ and $u^{\In}$ a solution to $(\Delta+k^2) u^{\In}=0$ in $\R^d$. Let $x_0$ be a point on $\partial D$ at which the non-degeneracy condition (\ref{nondeg}) is satisfied.
	Then $\partial D$ is of class $C^{m+1,\mu}$ near $x_0$.
	Moreover, if both $A_D$ and $n_D$ are $C^{\infty}$ on $\overline{D}$, then $\partial D$ is $C^{\infty}$ near $x_0$. If $A_D$ and $q_D$ are real analytic on $\overline{D}$, then $\partial D$ is also analytic near $x_0$.
\end{theorem}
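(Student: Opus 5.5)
Since $\partial D$ is already assumed to be $C^{1,\mu}$, no Caffarelli-type argument (Lemma \ref{th2}) is needed. The plan is to reduce Theorem \ref{anisoreg} to a \emph{transmission-type} free boundary problem, or equivalently a coupled elliptic system with overdetermined Cauchy data, and then apply Kinderlehrer--Nirenberg--Spruck hodograph/Legendre-transform regularity. The key observation is that on $\partial D$ we have $u=0$ and $\nu\cdot A_D\nabla u=\nu\cdot(I-A_D)\nabla u^{\In}$ by (\ref{nonscatbc}). Writing $U:=u+u^{\In}$ inside $D$ and $V:=u^{\In}$ near $x_0$, the pair satisfies
\[
\nabla\cdot(A_D\nabla U)+k^2n_DU=0 \ \hbox{in } D,\qquad (\Delta+k^2)V=0 \ \hbox{near } x_0,
\]
together with $U=V$ and $\nu\cdot A_D\nabla U=\partial_\nu V$ on $\partial D$. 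The free boundary $\partial D$ is therefore the set where two solutions of two different elliptic equations match to first order.

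The first step is boundary regularity of $u$ given only $C^{1,\mu}$ boundary. Standard Schauder theory for the Dirichlet problem in $C^{1,\mu}$ domains gives $u\in C^{1,\mu}(\overline D\cap B_R(x_0))$. The non-degeneracy (\ref{nondeg}) then says exactly that $\nabla u(x_0)\neq0$: since $u=0$ on $\partial D$, $\nabla u=(\partial_\nu u)\nu$ there, and $\nu\cdot A_D\nu\,\partial_\nu u=\nu\cdot(I-A_D)\nabla u^{\In}\neq0$ by ellipticity. Thus $\partial D=\{u=0\}$ is a regular level set near $x_0$, and $\nu=\pm\nabla u/|\nabla u|$. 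Substituting this into the Neumann condition yields a \emph{nonlinear first-order boundary condition} on $\partial D$ in terms of $\nabla u$ and the analytic data $\nabla u^{\In}$:
\[
\nabla u\cdot A_D\nabla u=\nabla u\cdot(I-A_D)\nabla u^{\In},\qquad u=0 .
\]

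The second step, which I expect to be the main obstacle, is to flatten via a partial hodograph transform. Rotate so that $\nabla u(x_0)$ points in the $x_d$ direction, and set $y=(x',u(x))$, $x_d=\phi(y)$ on $\{y_d\ge0\}$ (or $\le0$ according to sign). This transforms the elliptic equation for $u$ into a quasilinear elliptic equation for $\phi$, and the boundary condition into a nonlinear oblique condition on $\{y_d=0\}$. The delicate points are two:
\begin{itemize}
\item the transformed problem must satisfy the Lopatinskii/complementing condition, which should follow from ellipticity of $A_D$ and $\nabla u(x_0)\neq0$;
\item the coefficients involve $A_D$ and $\nabla u^{\In}$ evaluated at $(y',\phi(y))$, so they carry only the regularity of $\phi$ composed with $A_D$.
\end{itemize}
The second point is where the assumption $A_D\in C^{m+1,\mu}$ (one more derivative than $n_D$) enters, since the boundary condition involves $A_D$ undifferentiated while the equation involves $\nabla A_D$.

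The third step is a bootstrap using Agmon--Douglis--Nirenberg Schauder estimates for nonlinear oblique problems, or directly Kinderlehrer--Nirenberg--Spruck. Start from $\phi\in C^{1,\mu}$, improve to $C^{2,\mu}$ via difference quotients in $y'$, and then iterate. At each stage, $\phi\in C^{j,\mu}$ gives coefficients in $C^{\min(j,m),\mu}$ and hence $\phi\in C^{j+1,\mu}$, stopping at $\phi\in C^{m+1,\mu}$. Since $\partial D$ is the graph $x_d=\phi(x',0)$, this gives $\partial D\in C^{m+1,\mu}$ near $x_0$. For $C^\infty$ data the iteration never stops, and for real analytic data one invokes the analytic version of the Kinderlehrer--Nirenberg--Spruck theorem (Morrey's analyticity for nonlinear elliptic boundary problems), using that $u^{\In}$ is analytic.
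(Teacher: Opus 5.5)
Your proposal is correct and is essentially the paper's argument: a partial hodograph transform with $u$ itself as the new $x_d$-coordinate (the other $d-1$ coordinates unchanged) turns the overdetermined problem into a nonlinear second-order elliptic equation with a nonlinear first-order boundary condition on a flat boundary, followed by an elliptic-regularity bootstrap as in Kinderlehrer--Nirenberg. Two details you leave open do work out. The complementing condition you anticipate holds, since at $p=\nabla u(x_0)$ one has $\nu\cdot\nabla_p\bigl(p\cdot A_Dp-p\cdot(I-A_D)\nabla u^{\In}\bigr)=\partial_\nu u\,(\nu\cdot A_D\nu)\neq0$. The first step $C^{1,\mu}\to C^{2,\mu}$ should be run on the divergence-form (weak) version of the transformed problem, which also encodes the conormal condition, because $u$ is not a priori $C^2$ up to $\partial D$.
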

\noindent
The proof of this result is truly local in character and so the regularity requirements on $A_D$ and $n_D$ are only needed near $x_0$ and the incident wave $u^{\In}$ need only be defined in a neighborhood of $\partial \Omega$ near $x_0$. The initial assumption about the boundary of $D$ being of class $C^{1,\mu}$ can be relaxed to the assumption that it be only Lipschitz \cite{fb4}. In that case the non-degeneracy condition has to be replaced by 
\begin{eqnarray}
\label{nondeg2}
\nu \cdot (A_D-I)\nabla u^{\In}>c> 0 \hbox{ in a neighborhood of } x_0 \hbox{ or } \\
 \nu \cdot (A_D-I)\nabla u^{\In}<-c<0 \hbox{ in a neighborhood of } x_0 ~. \nonumber
\end{eqnarray}
\noindent
In the case of contrast only in the coefficient $n$, the non-degeneracy condition in Theorem \ref{scatnon1} naturally separates into a condition on $n_D$: $(n_D-1)(x_0)\neq 0$, {\bf and } one on $u^{\In}$: $u^{\In}(x_0) \neq 0$.
The non-degeneracy condition (\ref{nondeg}) can also be interpreted as two separate conditions, namely that $(A_D-I)\nu(x_0) \neq 0$, {\bf and } that $\nabla u^{\In}(x_0)$ is not orthogonal to $(A_D-I)\nu(x_0) $. Similarly for (\ref{nondeg2}), it requires that $(A_D-I)\nu $ stays bounded away from zero near $x_0$ and that $\nabla u^{\In}$ is non-zero and directed in the same direction as, or the opposite direction of, $(A_D-I)\nu$ near $x_0$. It is clear that the presence of a contrast in $n$ or $A$ is necessary for the regularity results in Theorem \ref{scatnon1} and Theorem \ref{anisoreg}, however in the case of Theorem \ref{scatnon1} we noted that the condition $u^{\In}(P)\neq 0$ was not necessary to rule out corners (vertices or conical points) and the nondegeneracy condition on $(n_D-1)$ may be relaxed.  The situation is completely different in the case of Theorem \ref{anisoreg}, where some non-degeneracy of $\nabla u^{\In}$ is essential. This is illustrated by the following example.

\vskip 10pt
\noindent
{\bf Example}: Consider the case $D=(0,1)^2$, with $A_D=aI$ and $n_D=a$ for some constant $a\neq 1$. If one takes $u^{\In}= \cos (k_1\pi x_1) \cos (k_2 \pi x_2)$ for some pair $(k_1,k_2) \in \mathbb{Z}^2$, then $u=0$ is a solution to (\ref{nonscat}) for the wave number $k=\pi \sqrt{k_1^2+k_2^2}$ with $u=0$ in $\R^2\setminus D$, but $\partial D$ is not smooth. We note that $\nabla u^{\In}=0$ at the four corners of $D$, and actually $(a-1)\nabla u^{\In}\cdot \nu =0$ at all other boundary points, so that the non-degeneracy condition (\ref{nondeg2}) is violated; we also note that $u^{\In}\neq 0$ at all four corners, and so $(n_D-1)u^{\In} \neq 0$ at all four corners, but this non-degenracy condition on the lower order term is not sufficient to guarantee boundary regularity when there is also contrast in the principal order coefficient.

\noindent
For this inhomogeneity one could also take $u^{\In}=\sin (k_1 \pi x_1)\sin (k_2 \pi x_2)$ with $k_i \in \mathbb{Z}\setminus \{0\}$. Then the function defined by
$$
u(x_1,x_2) = \begin{cases}\frac{1-a}a \sin (k_1 \pi x_1)\sin (k_2 \pi x_2) \hbox{ for } (x_1,x_2) \in D \\ 0 \hbox{ for } (x_1,x_2) \in \mathbb{R}^2\setminus D \end{cases}
$$
is a solution to (\ref{nonscat}) for $k=\pi \sqrt{k_1^2+k_2^2}$ and with $u=0$ in $\R^2 \setminus D$. We note that $\nabla u^{\In}=0$ at the four corners of $D$, and in this case $(a-1)\nabla u^{\In}\cdot \nu \neq 0$ in a neighborhood of all four corners, but this quantity approaches $0$ as one approaches any of the corners. In other words the non-degeneracy condition (\ref{nondeg2}) is again violated; in this case we also have $u^{\In}=0$ on $\partial D$.

\vskip 5pt
\noindent
Scattering properties of an (isotropic) inhomogeneity $(A_D,n_D,D)$ in ${\mathbb R}^2$ with the support $D$ containing a corner are studied in \cite{nn5, xiao} using the CGO solutions approach. We summarize the main results in the remark below.
\begin{remark}\label{jingni}
{\em Let $x_0\in \partial D$ be the vertex of a corner with aperture $2\theta$, $\theta\in(0,\pi/2)\cup(\pi/2,\pi)$. Assume that $A_D:=aI$ with a scalar function $a\in C^{1,\alpha}(\overline{D}\cap B_R(x_0))$, for some ball $B_R(x_0)$, and with $a(x_0)\neq 1$. Then any incident field $u^{\In}$ is scattered, provided that
\begin{itemize}
\item [(a)] $u^{\In}(x_0)\neq 0$ and $\nabla u^{\In}(x_0)\neq 0$, or
\item[(b)]  $u^{\In}(x_0)=0$, with  $2\theta\neq l \pi/N$ for any integer $l\geq 1$, with $N$ being the vanishing order of $u^{\In}$ at $x_0$ (in particular if $2\theta\in (0,2\pi)/{\mathbb Q}\pi)$), or 
\item [(c)] $u^{\In}(x_0)\neq 0$ and $\nabla u^{\In}(x_0)= 0$, and  either $2\theta \notin \left\{\pi/2, 3\pi/2\right\}$  or $2\theta \in \left\{\pi/2, 3\pi/2\right\}$ and $a(x_0)\neq n(x_0)$.
\end{itemize}
}
\end{remark} 
\noindent
If the assumptions of Remark \ref{jingni} are not satisfied, the analysis is inconclusive. The above example provides a non-scattering inhomogeneity with corners (and incident waves) falling within the inconclusive cases.

\vskip 10pt
\noindent
The proof of Theorem \ref{anisoreg} borrows a lot from the classical proof of regularity of free boundaries found in \cite{pom000}, in particular the use of the Hodograph transform, in which the solution to the over-determined boundary value problem in $D$ (\ref{nonscat2}) with boundary conditions  (\ref{nonscatbc})) is chosen as one coordinate, while the other $d-1$ coordinates are kept unchanged. By the Hodograph transform $D$ thus (locally) gets mapped to a domain with a flat boundary, and the regularity of the inverse of this transform determines the regularity of the boundary of $D$. This inverse mapping also has the form where one variable is transformed and the other $d-1$ stay unchanged. The technical part of the Hodograph approach now consists in deriving a boundary value problem satisfied by the inverse-transformation function. Not surprisingly it turns out to be a (non-linear) second order elliptic problem with a first order boundary condition on the transformed domain  (with a flat boundary). The fact that one may determine a boundary condition stems from the fact that the problem for $u$ is over-determined (has two boundary conditions) -- the first ($u=0$) was used to ``flatten" the boundary, the second results in a boundary condition. The novelty of the result in Theorem \ref{anisoreg} is the nonstandard right hand side and boundary condition, which originates from the non-scattering context.

\noindent
In the example above we provided a simple isotropic example of non-scattering for a non-smooth domain and particular incident waves, when the non-degeneracy condition (\ref{nondeg2}) is not satisfied. In the anisotropic situation there is a quite natural way to construct conductivity and index of refraction pairs $A_D,n_D$ that never scatter, {\it i.e.}, that don't scatter {\bf any } incident wave at {\bf any} frequency (see \cite{CVX}). It proceeds by defining
$$
A_D=\Phi^*[I]= \frac{D\Phi D\Phi^T}{\hbox{Det} D\Phi}\circ \Phi^{-1} \hbox{ and } n_D = \Phi^*[1]=\frac{1}{\hbox{Det} D\Phi}\circ \Phi^{-1}
$$
where $\Phi$ is a diffeomorphism of $D$ onto $D$, with $\Phi(x)=x$ for $x\in \partial D$ (supposing $\hbox{Det}D\Phi$ is positive). The construct $\Phi^*[I]$ is referred to as the push-forward of the identity ($I$) by the diffeomorphism $\Phi$. We note that this construction necessarily gives rise to a conductivity matrix $\Phi^*[I]$ that is anisotropic (has at least two different eigenvalues) unless $\Phi$ is the identity mapping. Indeed, if $\Phi^*[I]$ has only one eigenvalue then it will be a scalar and then, for $d=2$, the Cauchy-Riemann equations, or in general, Liouville's Theorem , guarantee that $\Phi$ is the identity everywhere, and not just on the boundary. Since this construction gives rise to non-scattering at any wave number it yields an inhomogeneity $(A_D,n_D,D)$ for which the transmission spectrum is all of $\mathbb{C}$. Fortunately it is not hard so see
$$
<\Phi^*[I]\nu,\nu> <\Phi^*[I]\tau,\tau> - <\Phi^*[I]\nu,\tau>^2=1~~ \hbox{ on } \partial D~,
$$
where $\nu$ denotes a unit normal vector and $\tau$ denotes any unit tangent vector (cf. \cite{GV}), and so this construction does not contradict the discreteness result for the transmission spectrum discussed earlier (see (\ref{dis})). This construction can be made on any Lipschitz  domain $D$, and in that context it becomes interesting to understand why it does not contradict the regularity result from Theorem \ref{anisoreg}. Loosely speaking: for $A_D=\Phi^*[I]$ and $n_D=\Phi^*[1]$ to satisfy the regularity assumptions of Theorem \ref{anisoreg}, the boundary $\partial D$ must already have the regularity asserted by the theorem. To be more precise one has the following result (cf. \cite{GV})

\begin{theorem}
\label{GV}
Let $D$ be a bounded domain in $\R^d$, $d\ge2$, of class $C^{1,\mu}$. Let $x_0$ be a point on $\partial D$ with $\nu$ denoting the outward unit normal. Suppose the diffeomorphism $\Phi$ satisfies
\begin{equation}
\label{nondeg5}
\nu \cdot (\Phi^*[I]-I)\xi \neq 0 \hbox{ for  some } \xi \in \mathcal{S}^{d-1}\hbox{ at the point } x_0~.
\end{equation}
Suppose furthermore that $\Phi^*[I](\cdot)$ is $C^{m+1,\mu}$ near $x_0$ in $\overline{D}$ and $\hbox{Det} D\Phi\circ \Phi^{-1}$ is $C^{m,\mu}$ near $x_0$ in $\overline{D}$  for some $m\ge 1$. Then $\partial D$ is of class $C^{m+1,\mu}$ near $x_0$.
\end{theorem}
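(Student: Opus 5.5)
The plan is to reduce Theorem \ref{GV} to Theorem \ref{anisoreg} by exhibiting, for the pair $A_D=\Phi^*[I]$, $n_D=\Phi^*[1]$, a non-scattering configuration that satisfies the non-degeneracy condition (\ref{nondeg}) at $x_0$. The regularity hypotheses match exactly: $A_D=\Phi^*[I]$ is $C^{m+1,\mu}$ near $x_0$, and $n_D=1/(\hbox{Det} D\Phi\circ\Phi^{-1})$ is $C^{m,\mu}$ near $x_0$, because $\hbox{Det}D\Phi\circ\Phi^{-1}$ is $C^{m,\mu}$ and bounded away from zero. Since the proof of Theorem \ref{anisoreg} is local, only this local regularity is needed.

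The first step is to verify non-scattering for an arbitrary incident wave. Let $u^{\In}$ solve $(\Delta+k^2)u^{\In}=0$ near $\overline D$ and set $U:=u^{\In}\circ\Phi^{-1}$ in $D$. By the change-of-variables identity for push-forwards, $\nabla\cdot(\Phi^*[I]\nabla U)+k^2\Phi^*[1]U=0$ in $D$. Because $\Phi$ is the identity on $\partial D$, $U=u^{\In}$ on $\partial D$. The weak formulation shows that the conormal derivatives also agree:
$$\int_D \Phi^*[I]\nabla U\cdot\nabla\varphi-k^2\Phi^*[1]U\varphi = \int_D \nabla u^{\In}\cdot\nabla(\varphi\circ\Phi)-k^2u^{\In}\,\varphi\circ\Phi$$
for all $\varphi\in H^1(D)$, and $\varphi\circ\Phi=\varphi$ on $\partial D$. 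Hence $\nu\cdot\Phi^*[I]\nabla U=\partial_\nu u^{\In}$ on $\partial D$. Consequently $u:=U-u^{\In}$ in $D$, extended by zero outside, solves (\ref{nonscat}) and vanishes in $\R^d\setminus D$. This holds for every $k$ and every incident wave.

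The second step is to choose $u^{\In}$ so that (\ref{nondeg}) holds at $x_0$. Hypothesis (\ref{nondeg5}) gives a unit vector $\xi$ with $\nu\cdot(\Phi^*[I]-I)\xi\neq0$ at $x_0$. Take the plane wave $u^{\In}=e^{ikx\cdot\xi}$. Then $\nabla u^{\In}(x_0)=ik\,e^{ikx_0\cdot\xi}\xi$, so $\nu\cdot(A_D-I)\nabla u^{\In}(x_0)\neq0$. The paper's proof requires real-valued data, so I would apply the argument to the real and imaginary parts separately, using $\cos(kx\cdot\xi)$ or $\sin(kx\cdot\xi)$. At least one of these has a gradient at $x_0$ that is a nonzero real multiple of $\xi$, because $\sin$ and $\cos$ do not vanish simultaneously. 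Since the problem is linear with real coefficients, the construction of the first step applies to real incident waves as well. Theorem \ref{anisoreg} then yields that $\partial D$ is $C^{m+1,\mu}$ near $x_0$.

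The main obstacle I expect is justifying the use of Theorem \ref{anisoreg} when its hypotheses are only local and $\Phi$ is merely a diffeomorphism, with no global smoothness of $A_D$ and $n_D$. This relies on the remark following Theorem \ref{anisoreg} that its proof is local. A second point is the equivalence of (\ref{nonscat}) with the overdetermined problem (\ref{nonscat2})--(\ref{nonscatbc}), which requires $u\in H^1$. This follows once $\Phi$ is bi-Lipschitz, so that $U\in H^1(D)$. I would also note that ellipticity (\ref{eq:Aellip}) of $\Phi^*[I]$ near $x_0$ follows from nondegeneracy of $D\Phi$.
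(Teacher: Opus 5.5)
Your proposal is correct, but it follows a different route from the paper. You obtain Theorem~\ref{GV} as a corollary of Theorem~\ref{anisoreg}, in three steps:
\begin{enumerate}
\item the push-forward pair $(\Phi^*[I],\Phi^*[1])$ scatters no incident wave at any $k$;
\item a plane wave $e^{ikx\cdot\xi}$ (or its real or imaginary part), with $\xi$ from (\ref{nondeg5}), then satisfies (\ref{nondeg}) at $x_0$;
\item the hypotheses of Theorem~\ref{GV} are exactly those of the localized Theorem~\ref{anisoreg}, with $n_D=1/(\hbox{Det} D\Phi\circ\Phi^{-1})$.
\end{enumerate}

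The paper instead writes $D\Phi=I+a\nu^{T}$ at $x_0$, so that (\ref{nondeg5}) means $a\neq 0$, and splits into two cases. When $a$ has a nonzero tangential component, it uses no PDE at all. It reads off the boundary values of $D\Phi D\Phi^{T}$ along $x_1\mapsto(x_1,\psi(x_1))$. It then inverts the explicit map $F$, whose Jacobian $-4bc$ is nonzero, by the inverse function theorem, and bootstraps the regularity of $\psi$ $m$ times. When $a$ is normal, it forms the overdetermined problem for $w(y)=\nu_{x_0}\cdot\Phi^{-1}(y)-\nu_{x_0}\cdot y$ and applies the hodograph transform directly. This is precisely your construction at $k=0$, with the linear ``incident field'' $x\mapsto \nu_{x_0}\cdot x$ and $\xi=\nu_{x_0}$.

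Your route is shorter and treats both cases uniformly. It also turns the paper's heuristic, that the push-forward construction cannot contradict Theorem~\ref{anisoreg}, into an actual proof. The price is twofold: it rests on the full free-boundary machinery of Theorem~\ref{anisoreg} in localized form, and it genuinely needs the hypothesis on $\hbox{Det} D\Phi\circ\Phi^{-1}$, because $n_D$ enters the equation when $k>0$. The paper's argument is elementary in the tangential case. Its $k=0$ problem involves only $\Phi^*[I]$, which is consistent with the later remark that the determinant hypothesis is not essential.
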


\vskip 10pt
\begin{remark}
{\em The condition (\ref{nondeg5}) is equivalent to $(D\Phi - I)\nu \neq 0$ at $x_0$. The requirement that $\hbox{Det} D\Phi\circ \Phi^{-1}$ be $C^{m,\mu}$ near $x_0$ is not essential -- it suffices to require that  $\Phi^*[I](\cdot)$ be $C^{m+1,\mu}$ near $x_0$. For $d\ge 3$ this is particularly obvious, due to the formula $\hbox{Det} \Phi^*[I] = (\hbox{Det} D\Phi)^{2-d}\circ \Phi^{-1}$.
}
\end{remark}
\vskip 10pt
\noindent
If we strengthen the non-degeneracy condition of Theorem \ref{GV} to assert that $(D\Phi - I)\nu$ has a nonzero  tangential component at $x_0$ then the proof of Theorem \ref{GV} is very elementary. To illustrate this let us consider the case $d=2$ and assume $x_0=(0,0)$ with the $\partial D$ locally given by 
$x_2=\psi(x_1)$ for some $C^{1,\mu}$ function $\psi$ with $\psi'(0)=0$. Since the mapping $x_1 \rightarrow (x_1,\psi(x_1))$ is $C^{1,\mu}$ near $0$ and since $D\Phi D\Phi^T\circ \Phi^{-1}= \hbox{Det}D\Phi \circ \Phi^{-1} \Phi^*[I]$ is $C^{m,\mu}$ on $\overline D$ it follows that the mapping $x_1 \rightarrow D\Phi D\Phi^T(x_1,\psi(x_1))$ is $C^{1,\mu}$ near zero. A simple calculation now gives that
\begin{eqnarray*}
&&D\Phi D\Phi^T [x_1,\psi(x_1)] = \\
&&\left[\begin{array}{lll} [1-(\phi_1)_2 \psi']^2+ [(\phi_1)_2]^2 &(1-(\phi_1)_2\psi')(1-(\phi_2)_2)\psi'+(\phi_1)_2(\phi_2)_2   \\
(1-(\phi_1)_2\psi')(1-(\phi_2)_2)\psi'+&\hskip -8pt(\phi_1)_2(\phi_2)_2 \hskip 16pt \left(1- (\phi_2)_2\right)^2 (\psi')^2+[(\phi_2)_2]^2 
\end{array}\right][x_1,\psi(x_1)]
\end{eqnarray*} 
and it thus follows that the mapping
\begin{equation}
\label{Ck1}
x_1 \rightarrow \left[ \begin{array}{ll} [1-(\phi_1)_2 \psi']^2+ [(\phi_1)_2]^2 \\
(1-(\phi_1)_2\psi')(1-(\phi_2)_2)\psi'+(\phi_1)_2(\phi_2)_2 \\
\left(1- (\phi_2)_2\right)^2 (\psi')^2+[(\phi_2)_2]^2
 \end{array}    \right][x_1,\psi(x_1)]
\end{equation}
is $C^{1,\mu}$ near $0$. Suppose the coordinate system has been chosen so that $\nu=(0,1)$ and $\psi'(0)=0$. From the stricter non-degeneracy condition that $(D\Phi-I)\nu$ has a tangential component at $x_0$, it now follows that $(\phi_1)_2(0,0) \neq 0$. From the fact that $\Phi$ is a diffeomorphism it also follows that $(\phi_2)_2(0,0)=\hbox{Det}D\Phi(0,0) \neq 0$.  Consider the mapping $F: \mathbb{R}^3 \rightarrow \mathbb{R}^3$ given by 
$$ 
F(a,b,c)=((1-ba)^2 + b^2, (1-ba)(1-c)a+bc,(1-c)^2a^2+c^2)^T
$$
locally near $(0,b,c)$, with $b=(\phi_1)_2(0,0)\neq 0$ and $c=(\phi_2)_2(0,0) \neq 0$. With this notation the mapping (\ref{Ck1}) may be written
\begin{equation}
\label{Ck2}
x_1 \rightarrow F(\psi'(x_1),(\phi_1)_2(x_1,\psi(x_1)),(\phi_2)_2(x_1,\psi(x_1)))
\end{equation}
We compute
$$
\hbox{Det}DF|_{(0,b,c)}=-4cb~,
$$ 
so that $F$ locally (near $(0,(\phi_1)_2(0,0),(\phi_2)_2(0,0)))$ has a $C^\infty$ inverse. By composing (\ref{Ck2})({\it i.e.}, (\ref{Ck1})) with $F^{-1}$ we conclude that the mapping $x_1 \rightarrow (\psi'(x_1),(\phi_1)_2(x_1,\psi(x_1)),(\phi_2)_2(x_1,\psi(x_1)))$ is $C^{1,\mu}$ near $0$. In particular $x_1 \rightarrow \psi(x_1)$ is $C^{2,\mu}$ near $0$. By repeating this argument another $m-1$ times (with increasingly regular $\psi$) we finally arrive at the conclusion that $\psi$ is in $C^{m+1,\mu}$ near $0$, {\it i.e.}, $\partial D$ is of class $C^{m+1,\alpha}$ near $x_0$. For more details about the higher dimensional version and in particular about the proof in the case when $D\Phi \nu_{x_0} =\alpha \nu_{x_0}$, with $\alpha \neq 1$ we refer to \cite{GV}. This latter case is more complicated since it apparently  cannot be resolved by only looking at the boundary values of $\Phi^*[I]$. Instead one formulates a second order overdetermined elliptic boundary value problem in divergence form with cofficient $\Phi^*[I]$ and then proceeds with an application of the Hodograph Transform (as in \cite{pom000}) to establish the ``free boundary" regularity result for $\partial D$. To be more precise one defines $w(y) = \nu_{x_0} \cdot \Phi^{-1}(y) -\nu_{x_0} \cdot y$, where $\nu_{x_0}$ denotes the normal to $\partial D$ at $x_0$. This function satisfies
$$
\nabla \cdot (\Phi^*[I] \nabla w) =-\nabla \cdot \Phi^*[I]\nu_{x_0}  ~~ \hbox{ in } D
$$
with 
$$
w=0 \hbox{ and } \nu \cdot \Phi^*[I]\nabla w =\nu \cdot (I-\Phi^*[I])\nu_{x_0} ~~ \hbox{ on } \partial D.
$$
One now uses this $w$, locally near $x_0$, as a coordinate (in the $\nu_{x_0}$ direction) for the Hodograph Transform. For more details we refer to \cite{GV}.

\section*{Acknowledgments}
{The research of FC was partially supported  by the NSF Grant DMS-24-06313. The research of MSV was partially supported by NSF Grant DMS-22-05912.}

\end{document}